\title{Localization and homological stability of configuration spaces}
\author{
        Martin Bendersky and Jeremy Miller
}
\date{\today}
\newcounter{prob}
\newcommand{\Q}{\mathbb{Q}}
\newcommand{\Zp}{\mathbb{Z}_{(p)}}
\newcommand{\Z}{\mathbb{Z}}
\newcommand{\s}{S^n_{(0)}}
\newcommand{\m}{\longrightarrow}
\newcommand{\Sp}{S^n_{(p)}}
\newtheorem{theorem}{Theorem}[section]
\newtheorem{lemma}[theorem]{Lemma}
\newtheorem{proposition}[theorem]{Proposition}
\newtheorem{corollary}[theorem]{Corollary}
\newtheorem{remark}[theorem]{Remark}
\newtheorem{definition}{Definition}[section]
\newenvironment{proof}[1][Proof]{\begin{trivlist}
\item[\hskip \labelsep {\bfseries #1}]}{\end{trivlist}}
\newcommand{\qed}{\nobreak \ifvmode \relax \else
      \ifdim\lastskip<1.5em \hskip-\lastskip
      \hskip1.5em plus0em minus0.5em \fi \nobreak
      \vrule height0.75em width0.5em depth0.25em\fi}
\begin{document}
\maketitle

\begin{abstract}
In \cite{Ch}, Church used representation stability to prove that the space of configurations of distinct unordered points in a closed manifold exhibit rational homological stability. A second proof was also given by Randal-Williams in \cite{RW} using transfer maps. We give a third proof of this fact using localization and rational homotopy theory. This gives new insight into the role that the rationals play in homological stability. Our methods also yield new information about stability for torsion in the homology of configuration spaces of points in a closed manifold.

\end{abstract}

\section{Introduction}

Throughout this paper, $M$ will be a smooth connected $n$-manifold with $n \geq 2$. Let $C_k(M)$ denote the configuration space of finite subsets of $M$ of cardinality $k$. That is, $C_k(M) =( M^k -\Delta_{fat} )/\Sigma_k$ where $\Delta_{fat}$ is the fat diagonal and $\Sigma_k$ is the symmetric group. When $M$ is the interior of a manifold with non-empty boundary, McDuff in \cite{Mc1} defined a stabilization map: $$t_k: C_k(M) \m C_{k+1}(M) $$ involving ``bringing a point in from infinity.'' McDuff proved that there is a number $N_k$ depending only on $k$ and $M$ such that $t_k$ induces an isomorphism on groups $H_i( \cdot ;\Z)$ for $i \leq N_k$ and $lim_{k \m \infty} N_k =\infty$  (Theorem 1.2 of \cite{Mc1}). Later Segal showed that one can take $N_k$ to be $k/2$ (Proposition A.1 of \cite{Se}).

The question of homological stability for configuration spaces of particles in closed manifolds was not addressed for over $30$ years until the work of Church (Corollary 3 of \cite{Ch}). A second proof was also given by Randal-Williams (Theorem C of \cite{RW}). There are two main difficulties in studying configuration spaces of particles in closed manifolds. There is no natural map $C_k(M) \m C_{k+1}(M)$ and the integral homology of the spaces $C_k(M)$ do not stabilize (this can be seen by computing $H_1(C_k(S^2))$ from the presentation of $\pi_1(C_k(S^2))$ given on page 255 of \cite{FV}). Nevertheless, Church was able to prove using representation stability that the rational homology of the spaces $C_k(M)$ does stabilize. Randal-Williams later gave a direct proof using transfer maps.

We give an alternative perspective on this phenomenon using localization and construct a zig-zig of maps of spaces between $C_k(M)$ and $C_{k+1}(M)$ which induces an isomorphism in rational homology in a stable range. We hope that this approach might also be useful for proving rational homological stability theorems in situations where there are no natural stabilization maps or even transfer maps.

We also give conditions for $C_k(M)$ and $C_j(M)$ to have isomorphic $p$-torsion. Until this point, the only theorems regarding homological stability for torsion for closed manifolds are due to B{\"o}digheimer, Cohen and Taylor in \cite{BCT} (see also \cite{RW}). Their work yields an explicit calculation of the $\mathbb F_p$-homology of configuration spaces when $p$=2 or the manifold is odd dimensional. From these explicit calculations, one can observe that the homology often stabilizes with $\mathbb F_p$ coefficients. In contrast, our results concern the entire $p$-torsion subgroups of the integral homology of configuration spaces (see Theorem \ref{torsion}). Interestingly, there are situations where the homology with $\mathbb{F}_p$ coefficients stabilizes but the $p$-torsion does not (page 255 of \cite{FV}).

\paragraph{Acknowledgments}

We would like to thank Oscar Randal-Williams and Tom Church for several helpful conversations as well as the referee for many important suggestions.

\section{Scanning}

Although McDuff did not address the question of homological stability for configuration spaces of points in a closed manifold, she did prove the following theorem (Theorem 1.1 of \cite{Mc1}).

\begin{theorem}
Let $M$ be a closed manifold and let $\dot TM \m M$ denote the fiberwise one point compactification of its tangent bundle. There is a map: $$s:C_k(M) \m \Gamma_k(\dot TM)$$ which induces homology isomorphisms through the same range that $t_k: C_k(M -pt) \m C_{k+1}(M -pt)$ is a homology isomorphism. 
\end{theorem}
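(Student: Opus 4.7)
The plan is to construct the scanning map explicitly from a Riemannian metric and then reduce the closed case to the known open case by excising a point and comparing via naturality.

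First I would set up the map. Fix a Riemannian metric on $M$ with positive injectivity radius $\epsilon$. For $S = \{x_1,\dots,x_k\} \in C_k(M)$ and $x \in M$, examine the geodesic $\epsilon$-ball $B_\epsilon(x)$: if $B_\epsilon(x)$ contains exactly one $x_i$, set $s(S)(x) = \exp_x^{-1}(x_i) \in T_xM$ (rescaled to have norm $<\epsilon$ in a fixed way); otherwise set $s(S)(x) = \infty$ in the fiber of $\dot TM$. Continuity is straightforward, and by construction $s$ is natural with respect to inclusions of open subsets. The target $\Gamma_k(\dot TM)$ is the component hit by configurations of cardinality $k$.

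Next I would handle the open case. For $M=\mathbb{R}^n$, this is the classical Pontryagin--Thom / May--Segal identification of $C_k(\mathbb{R}^n)$, after group completion, with $\Omega^n S^n$; the range statement for individual $C_k$ follows from Segal's refinement cited in the introduction. For a general open $n$-manifold $N$ one inducts on a handle decomposition using handles of index $<n$: attaching a handle yields compatible Mayer--Vietoris / cofiber sequences on configuration spaces and on section spaces, and scanning gives a map between them, so a range-sensitive five-lemma propagates the equivalence from the Euclidean model to $N$.

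Now I would pass from open to closed. Choose a point $p \in M$, a small closed disk $\bar D$ around $p$, and set $U=M-\{p\}$. Naturality of scanning gives the square
$$\begin{array}{ccc} C_k(U) & \longrightarrow & C_k(M) \\ \downarrow s_U & & \downarrow s_M \\ \Gamma_k(\dot TU) & \longrightarrow & \Gamma_k(\dot TM). \end{array}$$
The left vertical map is a homology isomorphism through the stabilization range of $t_k : C_k(U) \to C_{k+1}(U)$ by the open case. To conclude the same range for $s_M$, I would filter each horizontal map by the number of configuration points (resp.\ non-basepoint section values) lying in $\bar D$. In both filtrations the strata are, up to homotopy, products of the corresponding objects on $M-\bar D\simeq U$ with the local model on $\bar D \cong \mathbb{R}^n$; scanning matches the filtrations stratum by stratum, and the local factor is controlled by the Euclidean case. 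A spectral sequence comparison then transfers the equivalence from $s_U$ to $s_M$.

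The main obstacle will be the final step: keeping the range tight. The filtration introduces contributions from $C_j(U)$ with $j<k$, which stabilize through strictly worse ranges than $C_k(U)$. One must verify that these lower-cardinality pieces are matched by analogous pieces on the section side and cancel in the comparison, so that the resulting range for $s_M$ agrees with the stabilization range of $t_k$ on $U$ rather than being degraded to the range of $t_{k-1}$. This matching ultimately reduces to the fact that scanning is a homology equivalence on the disk $\bar D$ and that the "one point near $p$" contribution on the configuration side corresponds to the "section active near $p$" contribution, via the same local model.
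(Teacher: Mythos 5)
This theorem is not proved in the paper: it is quoted verbatim as Theorem~1.1 of McDuff \cite{Mc1}, with Segal's Proposition~A.1 invoked only to supply the explicit range in Corollary~\ref{scan}. There is therefore no in-paper argument for you to match; what follows compares your sketch against McDuff's original proof.

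Two steps in your outline have genuine gaps. First, the scanning map as you define it is not continuous: for a fixed $\epsilon$, as a second particle $x_j$ crosses the boundary of $B_\epsilon(x)$ the value of $s(S)(x)$ jumps from $\exp_x^{-1}(x_i)$ to $\infty$. One cannot scan a bare configuration directly into $\dot TM$; McDuff scans into a bundle whose fiber over $x$ is a labelled configuration space built from $T_xM$ (with particles annihilating as they exit the $\epsilon$-ball), and only afterwards identifies that fiber with $\dot T_xM$ at the level of homology after group completion. Second, the handle-induction step does not by itself yield a range. The cofiber/Mayer--Vietoris comparison between configuration and section spaces becomes an honest equivalence only after group completing the configuration side, $C(N)=\mathrm{hocolim}_k\,C_k(N)$, because attaching a handle does not induce a clean cofiber or product decomposition on a fixed $C_k(N)$, so a `range-sensitive five lemma' has no content there. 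McDuff and Segal instead first prove homological stability for $C_k$ of open manifolds directly and geometrically (McDuff's Theorem~1.2, Segal's appendix to \cite{Se}), and then combine that with the group-completed scanning equivalence to deduce the range for each individual $C_k$. Your open-to-closed filtration by particles near $p$ is close in spirit to what McDuff actually does (she compares a quasi-fibration $C(M)\to C(M\ \mathrm{rel}\ D)$ with fiber $\Omega^nS^n$ against the restriction fibration on section spaces), but without the two inputs above there is nothing yet controlling the range for any fixed cardinality, which is exactly the bookkeeping you flag as the main obstacle.
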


Here $\Gamma_k(\dot TM)$ is the space of degree $k$ sections of $\dot TM \m M$. See page 102 of \cite{Mc1} for the definition of the degree of a section. The map $s$ is called the scanning map. Combining this theorem with Segal's explicit homological stability range from \cite{Se}, one gets the following corollary.

\begin{corollary}
The map $s:C_k(M) \m \Gamma_k(\dot TM)$ induces an isomorphism on $H_i(\cdot,\Z)$ for $i \leq k/2$. 

\label{scan}
\end{corollary}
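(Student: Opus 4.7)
The plan is essentially to chain together the two previously cited results. By McDuff's theorem (the theorem immediately preceding the corollary), the scanning map $s: C_k(M) \to \Gamma_k(\dot TM)$ is a homology isomorphism in precisely the range of degrees in which the stabilization map $t_k: C_k(M - pt) \to C_{k+1}(M - pt)$ is a homology isomorphism. So it suffices to identify a concrete range for $t_k$.

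For the range on $t_k$, I would invoke Segal's explicit bound from Proposition A.1 of \cite{Se}, noted in the introduction: $t_k$ induces an isomorphism on $H_i(-;\Z)$ for $i \leq k/2$. Since $M - pt$ is the interior of a manifold with non-empty boundary (obtained by removing an open disk around the point), Segal's result applies directly.

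Combining these two facts gives that $s$ induces an isomorphism on $H_i(-;\Z)$ for $i \leq k/2$, which is exactly the statement of the corollary. There is no real obstacle here; the only point to verify is that the manifold $M - pt$ satisfies the hypotheses of Segal's theorem, which is immediate since removing a point from a smooth connected $n$-manifold of dimension $n \geq 2$ yields the interior of a manifold with boundary.
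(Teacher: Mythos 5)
Your proposal is correct and is exactly the argument the paper has in mind: the paper states the corollary as an immediate consequence of combining McDuff's Theorem 1.1 (the theorem quoted just above) with Segal's explicit range $i \leq k/2$ from Proposition A.1 of \cite{Se}, and you have simply spelled out that combination.
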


\section{Rational homotopy}

From this point onward, we will always assume that $M$ is a closed manifold. By Corollary \ref{scan}, to prove rational homological stability for the spaces $C_k(M)$, it suffices to prove that the spaces $\Gamma_k(\dot TM)$ are rationally homotopic. While this is not always the case, we will prove that $\Gamma_k(\dot TM)$ is rationally homotopic to $\Gamma_j(\dot TM)$ if $n$ is odd or $k$ and $j$ are both not equal to half the Euler characteristic of $M$.

Let $\s$ denote the rational localization of the $n$-sphere and let $\dot TM_{(0)} \m M$ denote the fiberwise rational localization of the bundle $\dot TM \m M$ \cite{Su}. Composition with the fiberwise localization map $l: \dot TM  \m \dot TM_{(0)}$ gives a map of spaces of sections $\Gamma_k(\dot TM) \m \Gamma_k(\dot TM_{(0)})$. By Theorem 5.3 of \cite{Mo}, this map is a localization map and hence induces an isomorphism $H_*(\Gamma_k(\dot TM);\Q) \m  H_*(\Gamma_k(\dot TM_{(0)});\Q)$. While the path components of $\Gamma( \dot TM  )$ are the spaces $\Gamma_k( \dot TM  )$ for $k \in \Z$, the path components of $\Gamma( \dot TM_{(0)}  )$ are the spaces $\Gamma_k( \dot TM_{(0)}  )$ for $k \in \Q$. 

\begin{proposition}
Given a section of $\dot TM_{(0)}$, there is a natural bijection $\Q = \pi_n(\s) \m \pi_0( \Gamma(\dot TM_{(0)}))$.
\label{pizero}
\end{proposition}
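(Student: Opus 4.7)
The plan is to apply obstruction theory to the fibration $\dot TM_{(0)} \to M$. The fiber $\s$ is $(n-1)$-connected with $\pi_n(\s) = \Q$, and the next nonzero rational homotopy group (if any) lies in degree $2n-1$; since $M$ has dimension $n$, only the primary obstruction in degree $n$ can contribute to $\pi_0$ of the section space.

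Let $\sigma_0$ be the given section and fix a CW structure on $M$. Since $\pi_i(\s) = 0$ for $i \leq n-1$, any section can be homotoped to agree with $\sigma_0$ on the $(n-1)$-skeleton $M^{(n-1)}$. For two sections agreeing with $\sigma_0$ on $M^{(n-1)}$, the standard difference construction produces a cellular $n$-cochain with values in the local coefficient system determined by $\pi_n$ of the fiber. Because the fibers of $\dot TM$ are one-point compactifications of the fibers of $TM$, the monodromy on $\pi_n(\s) = \Q$ acts by multiplication by $\pm 1$ according to the orientation character $w$ of $TM$, so this local system is $\Q^w$. Since $\dim M = n$, every $n$-cochain is automatically a cocycle, and no higher obstructions arise within the dimension of $M$. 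Two such sections are homotopic rel $M^{(n-2)}$ if and only if their difference cochains are cohomologous. This identifies $\pi_0(\Gamma(\dot TM_{(0)}))$ with $H^n(M;\Q^w)$, which, by Poincar\'e duality with twisted coefficients ($\Q^w \otimes \omega_M$ becomes trivial), is isomorphic to $H_0(M;\Q) = \Q$ for any closed connected $n$-manifold, orientable or not.

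The natural bijection $\pi_n(\s) \to \pi_0(\Gamma(\dot TM_{(0)}))$ is realized geometrically by a disk-modification construction: given $\alpha \in \pi_n(\s)$, choose a small embedded disk $D^n \subset M$ with a trivialization of $\dot TM_{(0)}$ over $D^n$, and alter $\sigma_0$ on $D^n$ by the composite of the collapse $D^n \to D^n/\partial D^n = S^n$ with a representative of $\alpha$, matching $\sigma_0$ along $\partial D^n$. Under the obstruction-theoretic identification above, this produces the difference cochain concentrated on a single top cell with coefficient $\alpha$, and its image under Poincar\'e duality is $\alpha \in \Q$. Naturality in $\sigma_0$ is built into the construction (changing $\sigma_0$ merely translates the parametrization), and independence of the chosen disk and trivialization follows from connectedness of $M$ and a homotopy argument.

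The main obstacle is identifying the local coefficient system $\pi_n(\s)$ on $M$ as $\Q^w$ and invoking Poincar\'e duality with the correct twist, so that the computation works uniformly for orientable and non-orientable $M$. Once this is set up, the vanishing of rational homotopy of $\s$ below degree $2n-1$ ensures that obstruction theory collapses at the primary stage, and the rest is standard.
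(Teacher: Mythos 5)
Your proof is correct, but it follows a genuinely different route from the paper's. The paper attaches the top cell to get the fibration $\Omega^n \s \to \Gamma(\dot TM_{(0)}) \to \Gamma(\dot TM^{n-1}_{(0)})$, reads off surjectivity of $\pi_n(\s) \to \pi_0(\Gamma(\dot TM_{(0)}))$ from connectivity of the base, and then proves injectivity indirectly: it shows $\pi_1(\Gamma(\dot TM_{(0)})) \to \pi_1(\Gamma(\dot TM^{n-1}_{(0)}))$ is onto by comparing with the \emph{unlocalized} fiber sequence (where $\pi_0(\Omega^n S^n) \to \pi_0(\Gamma(\dot TM))$ is already known to be a bijection via the integral degree of a section), invoking M{\o}ller's theorem that these section spaces are nilpotent, and using exactness of localization on nilpotent groups. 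Your argument instead classifies sections directly by obstruction theory, identifying $\pi_0(\Gamma(\dot TM_{(0)}))$ with $H^n(M;\Q^w)$ for the orientation local system $\Q^w$, and closes with twisted Poincar\'e duality to get $\Q$. The two proofs buy different things: the paper's sidesteps local coefficients and duality entirely by leaning on the known integral case plus nilpotence of gauge/section spaces, which is in keeping with the localization-theoretic theme of the paper and is what it later reuses for the $p$-local statement; your argument is more self-contained and explains conceptually \emph{why} the answer is $\Q$ in both the orientable and non-orientable cases, since the twist in $\pi_n$ of the fiber exactly cancels the twist in Poincar\'e duality.

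Two small remarks. First, your appeal to "no higher obstructions within the dimension of $M$" is actually automatic from $\dim M = n$ and does not need the observation that the next nonzero rational homotopy group of $\s$ is in degree $2n-1$; if you drop that dependence, your argument transfers verbatim to the $p$-local setting (replacing $\Q^w$ by $\Z_{(p)}^w$), matching the paper's later remark that $\pi_0(\Gamma(\dot TM_{(p)})) = \Z_{(p)}$. Second, the key classification step (sections of a fibration with $(n-1)$-connected simple fiber over an $n$-complex are in bijection with $H^n(\text{base};\pi_n(\text{fiber}))$ via the difference cocycle) is standard but worth a reference; as stated it is correct, including the "rel $M^{(n-2)}$" caveat, because killing a coboundary requires modifying the homotopy on the $(n-1)$-cells.
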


\begin{proof}
Pick a $CW$-structure on $M$ with exactly one $n$-cell. Let $M_{n-1}$ be the $(n-1)$-skeleton of $M$ and let $\dot TM^{n-1}_{(0)}$ denote the restriction of $\dot TM_{(0)}$ to $M_{n-1}$. The inclusion $M_{n-1} \m M$ is a cofibration with cofiber equal to $S^n$. Thus, there is a fiber sequence: $$\Omega^n \s \m \Gamma(\dot TM_{(0)}) \m  \Gamma(\dot TM^{n-1}_{(0)}).$$  Here $\Omega^n$ denotes the space of based maps from $S^n$. Note that the map $\Omega^n \s \m \Gamma(\dot TM_{(0)})$ depends on choice of base point section. Since $\s$ is $(n-1)$-connected, $\Gamma(\dot TM^{n-1}_{(0)})$ is connected and so $\Q=\pi_n(\s)=\pi_0(\Omega^n \s) \m \Gamma(\dot TM_{(0)})$ is a surjection.

To see that it is an injection, we will show that $\pi_1( \Gamma(\dot TM_{(0)})) \m \pi_1( \Gamma(\dot TM^{n-1}_{(0)}))$ is a surjection. By Theorem 4.1 of \cite{Mo}, these section spaces are nilpotent. Consider the analogous fiber sequence without localizing. Since $\pi_0(\Omega^n S^n) \m \pi_0(\Gamma(\dot TM))$ is a bijection, $\pi_1( \Gamma(\dot TM)) \m \pi_1( \Gamma(\dot TM^{n-1}))$ is a surjection. Since localization is an exact functor (Proposition 4.6 of \cite{Hi}), $\pi_1( \Gamma(\dot TM_{(0)})) \m \pi_1( \Gamma(\dot TM^{n-1}_{(0)}))$ is also a surjection.

\end{proof}

The topology of these spaces depends heavily on whether $n$ is even or odd. First we will address the case of $n$ odd.

\begin{proposition}
If $n$ is odd, the path components of $\Gamma(\dot TM_{(0)})$ are all homotopic. 
\label{odd}
\end{proposition}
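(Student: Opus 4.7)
My plan is to reuse the fibration sequence
\[ \Omega^n \s \m \Gamma(\dot TM_{(0)}) \m \Gamma(\dot TM^{n-1}_{(0)}) \]
from the proof of Proposition \ref{pizero} and exploit how degenerate the fiber becomes when $n$ is odd. By Serre's rational computation, for odd $n$ the only nonvanishing rational homotopy group of $S^n$ is in degree $n$, so $\s \simeq K(\Q,n)$ and hence $\Omega^n \s \simeq K(\Q,0)$. In particular, each path component of the fiber is weakly contractible, while $\pi_0(\Omega^n \s) = \Q$.

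The base $\Gamma(\dot TM^{n-1}_{(0)})$ is connected, as noted in the proof of Proposition \ref{pizero}, and that same proposition exhibits a bijection $\Q = \pi_n(\s) \to \pi_0(\Gamma(\dot TM_{(0)}))$. Comparing this to the $\pi_0$-exact-sequence of the fibration forces the $\pi_1$-action of the base on $\pi_0$ of the fiber to be trivial, so each path component $E_\alpha$ of $\Gamma(\dot TM_{(0)})$ meets every fiber in exactly one component of $\Omega^n \s$. Restricting the fibration to $E_\alpha$ then yields a fibration $E_\alpha \m \Gamma(\dot TM^{n-1}_{(0)})$ over the same connected base whose fiber is a single, contractible, component of $\Omega^n \s$. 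The long exact sequence in homotopy forces this restricted projection to be a weak equivalence.

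To upgrade weak equivalence to honest homotopy equivalence between components, I would invoke Theorem 4.1 of \cite{Mo} to know these section spaces are nilpotent and of the homotopy type of CW complexes. The main technical point to check carefully is that the restriction $p|_{E_\alpha}$ is still a fibration with the asserted single-component fiber; this is routine given the triviality of the $\pi_1$-action above, but is the one place where a careless argument could go wrong.
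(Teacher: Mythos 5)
Your argument is correct, and it takes a genuinely different route from the paper. The paper proves Proposition \ref{odd} by obstruction theory: given sections $\sigma_1,\sigma_2$, it forms the bundle over $M$ whose fiber at $m$ is the space of degree-one self-maps of $\dot T_mM_{(0)}$ carrying $\sigma_1(m)$ to $\sigma_2(m)$; since those fibers are $\Omega^n_1 K(\Q,n)\simeq *$, a global section $f$ exists and is unique up to fiberwise homotopy, and the symmetric construction supplies a fiberwise homotopy inverse, so post-composition with $f$ is the desired equivalence of components. Your proof instead stays inside the fibration $\Omega^n\s \to \Gamma(\dot TM_{(0)}) \to \Gamma(\dot TM^{n-1}_{(0)})$ already set up for Proposition \ref{pizero}, uses that Proposition's bijection on $\pi_0$ to kill the $\pi_1$-monodromy on $\pi_0$ of the fiber, and concludes that each component of the total space maps by a weak equivalence onto the connected base (the restricted projection being a fibration with weakly contractible fiber). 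The technical points you flag — that restricting a Serre fibration to a union of path components of the total space is still a Serre fibration, and that the fiber of the restriction is the single contractible component singled out by the trivial monodromy — are indeed the right things to verify, and they do hold; nilpotence from Theorem 4.1 of \cite{Mo} is not really what you need to pass from weak equivalence to homotopy equivalence (Whitehead plus the CW homotopy type of these section spaces suffices). The trade-off: your route is shorter and yields the extra structural fact that every component of $\Gamma(\dot TM_{(0)})$ is equivalent to $\Gamma(\dot TM^{n-1}_{(0)})$, whereas the paper's construction produces explicit degree-shifting bundle self-maps of $\dot TM_{(0)}$, which is the form of the statement that gets reused for the even-dimensional and $p$-local analogues (Propositions \ref{evenNonOr}, \ref{oddSec}, \ref{evenSec}).
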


\begin{proof}
Let $\sigma_1$ and $\sigma_2$ be sections of $\Gamma(\dot TM_{(0)})$. Let $P$ be the bundle whose fiber over a point $m \in M$ is the space of degree one self maps of $\dot T_m M_{(0)}$ sending $\sigma_1(m)$ to $\sigma_2(m)$. Note that by Serre's calculations of rational homotopy groups of spheres \cite{Ser}, $\s \simeq K(\Q,n)$ for $n$ odd. Since $Map(X,Y)_{(0)} \simeq Map(X,Y_{(0)}) \simeq Map(X_{(0)},Y_{(0)})$, the fibers of $P$ are homotopic to $\Omega^n_1 \s \simeq \Omega^n_1  K(\Q,n) \simeq *$. Here the subscript $1$ denotes the subspace of degree one maps. Since the fibers are contractible, there are no obstructions to finding a section $f \in \Gamma(P)$  (the relevant obstructions lie in $H^i(M;\pi_{i-1}(\Omega^n_1  K(\Q,n)))$). Instead of thinking of $f$ as a section of $P$, we can view $f$ as a bundle map $f:\dot TM_{(0)} \m \dot TM_{(0)}$ such that $f \circ \sigma_1 =\sigma_2$. To see that $f$ is unique up to fiberwise homotopy, let $f':\dot TM_{(0)} \m \dot TM_{(0)}$ be another such map and let $P'$ be the bundle whose fiber over a point $m \in M$ is the space of maps $H:[0,1] \m P_m$ such that $H(0)=f $ and $H(1)=f'$. The fibers of $P'$ are homotopic to $\Omega^{n+1} \s$ and so there are also no obstructions to finding sections of $P'$. These sections correspond to fiberwise homotopies between $f'$ and $f$. The same argument shows that we can also find a bundle map $g:TM_{(0)} \m TM_{(0)}$ such that $g \circ \sigma_2 =\sigma_1$. Uniqueness shows that $g$ is a fiber homotopy inverse to $f$. Thus, composition with $f$ gives a homotopy equivalence between the path component of $\Gamma(\dot TM_{(0)})$ containing $\sigma_1$ and the component containing $\sigma_2$.
\end{proof}

Before we discuss the case of even dimensional manifolds, we will discuss the case that the tangent bundle is trivial. However, we will make no assumptions regarding the parity of $n$. In this case, we have a natural homeomorphism between $\Gamma_k(\dot TM_{(0)})$ and $Map_k(M,\s)$, the space of degree $k$ maps from $M$ to $\s$. A function $f: M \m \s$ is defined to be degree $k$ if $f_*([M])=k \cdot l_* [S^n]$ with $l:S^n \m \s$ the localization map. We will call $l_* [S^n]$ the fundamental class of $\s$. Note that $M$ has a fundamental class if the tangent bundle is trivial since parallelizable manifolds are orientable.

\begin{lemma}
For $k$ and $j$ non-zero, there is a homotopy equivalence between $Map_k(M,\s)$ and $Map_j(M,\s)$.
\label{map}
\end{lemma}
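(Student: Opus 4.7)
The plan is to produce, for each non-zero $r \in \Q$, a self-homotopy equivalence $\phi_r \colon \s \m \s$ of degree $r$; post-composition with $\phi_{j/k}$ will then yield the desired homotopy equivalence $Map_k(M,\s) \m Map_j(M,\s)$.

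To build $\phi_r$, I first take a non-zero integer $m$ and consider a degree-$m$ self-map $\mu_m \colon S^n \m S^n$ together with its rational localization $\mu_m \colon \s \m \s$. I claim this localization is a homotopy equivalence. Since $\s$ is nilpotent, by Whitehead's theorem it suffices to check that $\mu_m$ is an isomorphism on every homotopy group. By Serre \cite{Ser}, the non-vanishing rational homotopy groups of $S^n$ are $\pi_n \otimes \Q = \Q$, on which $\mu_m$ is multiplication by $m$, and, when $n$ is even, $\pi_{2n-1} \otimes \Q = \Q$ generated by the Whitehead square $[\iota_n,\iota_n]$, on which naturality and bilinearity of the Whitehead product force $\mu_m$ to act as multiplication by $m^2$. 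Both are invertible for $m \neq 0$, so $\mu_m$ admits a homotopy inverse $\mu_m^{-1}$. For $r = p/q \in \Q$ with $p$ and $q$ non-zero integers, set $\phi_r := \mu_p \circ \mu_q^{-1}$; it is a composition of homotopy equivalences, and evaluating on the fundamental class $l_\ast [S^n] \in H_n(\s;\Z) = \Q$ shows that $\phi_r$ has degree $r$.

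Post-composition with $\phi_r$ defines $\Phi_r \colon Map(M,\s) \m Map(M,\s)$. Since post-composition with a homotopy equivalence is a homotopy equivalence of mapping spaces (a homotopy inverse is given by post-composition with $\phi_r^{-1}$), $\Phi_r$ is a homotopy equivalence of the total mapping space. Multiplicativity of degree, $(\phi_r \circ f)_\ast [M] = \phi_{r\ast}(k \cdot l_\ast [S^n]) = rk \cdot l_\ast [S^n]$, shows that $\Phi_r$ carries the component $Map_k(M,\s)$ into $Map_{rk}(M,\s)$, so it restricts to a homotopy equivalence between them. Setting $r = j/k$ completes the proof.

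I expect the main subtlety to be the verification that $\mu_m$ acts as multiplication by $m^2$ rather than $m$ on $\pi_{2n-1} \otimes \Q$ in the even-dimensional case; this is essential both for the invertibility claim (and hence existence of $\mu_m^{-1}$) and for the rational-degree computation giving $\phi_r$. Once that naturality/bilinearity input is in place, the rest of the argument is formal and notably avoids any explicit use of Sullivan models.
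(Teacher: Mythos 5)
Your proposal is correct and takes essentially the same approach as the paper's proof: the paper simply picks a degree $j/k$ self-map $f$ of $\s$ and a degree $k/j$ self-map $g$, asserts $f \circ g \simeq g \circ f \simeq \mathrm{id}$, and concludes that post-composition with $f$ gives the equivalence. You fill in the justification the paper leaves implicit by constructing the self-maps explicitly as $\mu_p \circ \mu_q^{-1}$, verifying via Serre and the Whitehead square that each $\mu_m$ is a rational homotopy equivalence, and thereby getting the homotopy inverse for free rather than appealing to the (true, but unstated) fact that a degree-$1$ self-map of $\s$ is homotopic to the identity.
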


\begin{proof}
Let $f:\s \m \s$ be a degree $j/k$ map and let $g:\s \m \s$ be a degree $k/j$ map. Since $f \circ g \simeq  g \circ f \simeq id$, composition with $f$ gives a homotopy equivalence between $Map_k(M,\s)$ and $Map_j(M,\s)$ with homotopy inverse given by composition with $g$.
\end{proof}

We will now show that the bundle $\dot TM_{(0)} \m M$ is a trivial bundle if $M$ is orientable.

\begin{definition}
A $\s$-bundle is called orientable if the structure group can be reduced to $Map_1(\s,\s)$.
\end{definition}

\begin{lemma}
Let $E \m M$ be an orientable $S^n_{(0)}$-bundle. Then $E$ is bundle isomorphic to the trivial bundle.
\label{bundleiso}
\end{lemma}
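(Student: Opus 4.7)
The plan is to reduce triviality of $E$ to a vanishing statement about the rational homotopy of the structure monoid $G := Map_1(\s,\s)$. Since $E$ is orientable, its isomorphism class corresponds to a homotopy class $M \m BG$, and the lemma amounts to showing this class is null. Equivalently, by obstruction theory one considers the fiber bundle $P \m M$ whose fiber over $m$ is $Map_1(\s, E_m)$: a section of $P$ is a bundle map $M \times \s \m E$ restricting to a degree-one (hence homotopy) equivalence on every fiber, so a bundle isomorphism. Orientability ensures the coefficient systems are untwisted, so obstructions to such a section lie in $H^{i+1}(M;\pi_i(G))$, and it therefore suffices to show $\pi_i(G)=0$ for $i \leq n-1$.

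The key computation is the rational homotopy type of $G$, and for this I would use the identification $Map_1(\s,\s)\simeq Map_1(S^n,\s)$ noted earlier in the paper together with the evaluation fibration $$\Omega^n_1 \s \m Map_1(S^n,\s) \m \s.$$ For $n$ odd, Serre's theorem gives $\s \simeq K(\Q,n)$ and $\Omega^n_1 \s \simeq *$ (as already exploited in Proposition \ref{odd}), so $G \simeq \s \simeq K(\Q,n)$ and $\pi_i(G)=0$ for $i<n$. For $n$ even, Serre's calculation yields $\Omega^n_1 \s \simeq K(\Q,n-1)$, and the connecting homomorphism $\pi_n(\s) \m \pi_{n-1}(\Omega^n_1 \s)=\pi_{2n-1}(\s)$ is classically the Whitehead square $\alpha \mapsto [\iota_n,\alpha]$. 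For $n$ even, $[\iota_n,\iota_n]$ is a rational generator of $\pi_{2n-1}(S^n)$, so this connecting map is an isomorphism $\Q \to \Q$; the long exact sequence then collapses, killing $\pi_{n-1}(G)$ and $\pi_n(G)$ and leaving $\pi_{2n-1}(G)=\Q$ as the only non-vanishing rational homotopy group, whence $G \simeq K(\Q,2n-1)$.

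In both parities, the first non-vanishing $\pi_i(G)$ sits in degree at least $n$, so every potential obstruction lives in $H^{i+1}(M;\Q)$ with $i+1 > n = \dim M$ and therefore vanishes. Equivalently, $BG$ is a rational Eilenberg--MacLane space ($K(\Q,n+1)$ for $n$ odd and $K(\Q,2n)$ for $n$ even) in a degree strictly greater than $\dim M$, so $[M,BG]$ is trivial and any orientable $\s$-bundle over $M$ must be isomorphic to $M \times \s$. The main obstacle is the even-dimensional case, which relies on identifying the connecting map of the evaluation fibration with a Whitehead product and on the non-vanishing of the rational Whitehead square $[\iota_n,\iota_n]$ for $n$ even; the odd case and the final obstruction-theoretic step are formal.
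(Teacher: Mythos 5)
Your proof is correct and follows the same overall strategy as the paper: reduce triviality of $E$ to the null-homotopy of its classifying map $M \to BMap_1(\s,\s)$ and then observe that $BMap_1(\s,\s)$ is $n$-connected while $\dim M = n$. The difference lies in how the connectivity of $BMap_1(\s,\s)$ is established. The paper simply cites M{\o}ller and Raussen (who in turn cite Thom) for the homotopy type of $Map_d(\s,\s)$, whereas you derive the rational homotopy of $Map_1(\s,\s)$ from scratch via the evaluation fibration $\Omega^n_1 \s \to Map_1(\s,\s) \to \s$, identifying the connecting map with the Whitehead product $\alpha \mapsto [\iota_n,\alpha]$ and using that $[\iota_n,\iota_n]$ is rationally nontrivial (Hopf invariant $\pm 2$) when $n$ is even. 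That Whitehead-product computation is exactly what makes the even case work, and it is the same technical ingredient the paper itself uses later in the proof of Proposition \ref{torCon}, so your approach is arguably more self-contained and better integrated with the rest of the paper's methods, at the cost of being longer than a citation.

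One small point of precision: you attribute the untwistedness of the obstruction-theoretic coefficient systems to orientability. What orientability actually buys you is the reduction of the structure group to the \emph{connected} monoid $Map_1(\s,\s)$ (rather than the two-component $Map_{\pm 1}$); it is this connectedness that forces $BMap_1(\s,\s)$ to be simply connected and the local coefficients to be constant. This does not affect the correctness of the argument, only the phrasing of that step.
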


\begin{proof}
Since $E$ is orientable, it is classified by a map to $BMap_1(\s,\s)$. Using a result of Thom from \cite{Th}, M{\o}ller and Raussen (\cite{MR} Example 2.5) observed that: $$ Map_d(\s,\s) \simeq
\begin{cases}
 \s \times S_{(0)}^{n-1} & \text{if }n\text{ is even and } d = 0 \\
S_{(0)}^{2n-1} & \text{if }n\text{ is even and } d \neq 0 \\
\s & \text{if }n\text{ is odd.}
\end{cases}$$ Since $BMap_1(\s,\s) $ is at least $n$-connected and $M$ is $n$ dimensional, the classifying map of $E$ is null-homotopic. 

\end{proof}

If $M$ is orientable, the bundle $\pi: \dot TM_{(0)} \m M$ is also orientable. It is not true that the bundle isomorphism between $\dot TM_{(0)}$ and the trivial bundle necessarily preserves the zero section and hence the homeomorphism $\Gamma(\dot TM_{(0)}) \simeq Map(M,\s)$ might not preserve degree. Note that if $M$ is orientable, the degree of a section of $\dot TM \m M$ is the algebraic intersection number of that section with the zero section.

\begin{proposition}
If $n$ is even and $M$ is orientable, there is a homeomorphism $\Gamma_k(\dot TM_{(0)}) \cong Map_{k-\chi(M)/2}(M,\s)$ induced by a trivialization of $\dot TM_{(0)}$.
\label{even}
\end{proposition}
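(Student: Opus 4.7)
The plan is to use the bundle trivialization provided by Lemma \ref{bundleiso} to obtain a homeomorphism of section spaces, and then pin down the shift in degree indexing through an intersection computation involving the Euler class of $TM$.

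First I would fix a bundle isomorphism $\phi: \dot TM_{(0)} \to M \times \s$ (available by Lemma \ref{bundleiso}, since the orientability of $M$ makes the rationalized tangent sphere bundle orientable). Post-composition with $\phi$ yields a homeomorphism $F: \Gamma(\dot TM_{(0)}) \to Map(M,\s)$, and it suffices to verify that $F$ sends the $k$-th path component to the $(k - \chi(M)/2)$-th path component.

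Next, I would establish an intersection formula for sections of the trivial bundle $M \times \s$: if $\tau_1, \tau_2$ are sections corresponding to maps $\tilde\tau_i:M\to \s$ of degrees $d_i$, then their algebraic intersection number is $d_1+d_2$. This can be derived from a rational Poincar\'e duality computation: the Poincar\'e dual of a graph $\tau$ in $H^n(M\times \s;\Q)$ is pinned down by the condition that its pullback to $M$ gives the Euler class of the normal bundle $\tilde\tau^*(T\s)$, which equals $2 d_i$ times the fundamental class of $M$ (using $\chi(S^n)=2$ for $n$ even). One can either interpret this directly in the rational cohomology of $M\times \s$, or reduce to the genuine manifold $M\times S^n$ by scaling sections to integer degrees.

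To compute the shift itself, I would apply $F$ to the zero section $z_0:M\to \dot TM_{(0)}$. By definition, the section-degree of $z_0$ is its self-intersection, which equals the Euler number $\chi(M)$ of $TM$. Since $\phi$ is a bundle isomorphism, it preserves intersection numbers, so $F(z_0)\cdot F(z_0)=\chi(M)$; by the formula above, this equals $2\deg F(z_0)$, giving $\deg F(z_0)=\chi(M)/2$. For a general $\sigma \in \Gamma_k(\dot TM_{(0)})$, the chain of identities $k = \sigma\cdot z_0 = F(\sigma)\cdot F(z_0) = \deg F(\sigma)+\chi(M)/2$ then yields $\deg F(\sigma)=k-\chi(M)/2$, as claimed. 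The main obstacle I foresee is justifying the intersection pairing rigorously in the rationalized setting, where $\s$ is not a smooth manifold; I expect to handle this by performing the honest manifold-level intersection theory in $\dot TM$ and $M\times S^n$ before passing to rational localizations, using the fact that the localization map preserves Euler-class data.
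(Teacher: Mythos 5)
Your proposal is correct and follows essentially the same route as the paper: trivialize via Lemma \ref{bundleiso}, then read off the degree shift from the intersection pairing on $H_n$, with the self-intersection of the zero section giving $\chi(M)=2\deg F(z_0)$ and the cross term $\sigma\cdot z_0=k$ giving $\deg F(\sigma)=k-\chi(M)/2$. The only substantive difference is packaging: the paper works directly in the K\"unneth basis $[B],[F]$ of $H_n(M\times\s)$, writes $\sigma_*[M]=[B]+b[F]$, and uses the hyperbolic intersection form ($[B]\bullet[B]=[F]\bullet[F]=0$, $[B]\bullet[F]=1$) to solve for $b$, whereas you isolate the identity $\tau_1\cdot\tau_2=d_1+d_2$ as a standalone lemma. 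Your Euler-class sketch, as stated, only pins down the \emph{self}-intersection (the restriction of $\mathrm{PD}(\tau)$ to $\tau$ itself is $e(\tilde\tau^*T\s)=2d$), so to get the cross term with $\tau_1\neq\tau_2$ you still need the K\"unneth decomposition of $H_n(M\times\s)$ --- which is exactly the computation the paper performs, and which you do acknowledge as an alternative route. You also flag, more carefully than the paper, that the intersection pairing must be interpreted rationally since $\s$ is not a closed manifold; reducing to $M\times S^n$ before localizing, as you suggest, is a reasonable way to make this rigorous.
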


\begin{proof}

By Lemma \ref{bundleiso}, there exists a trivialization $\tau : \dot TM_{(0)} \m M \times \s$. Let $[B],[F] \in H_n( M \times \s) =\Z \times \Q$ be the generators associated to the fundamental classes of the base and fiber respectively. Let $\bullet : H_n(\dot TM_{(0)}) \times H_n(\dot TM_{(0)}) \m \Q$ denote intersection number. Note that $\tau^{-1}_*([B]) \bullet \tau^{-1}_*([F])=1$, $\tau^{-1}_*([F]) \bullet \tau^{-1}_*([F])=0$ and $\tau^{-1}_*([B]) \bullet \tau^{-1}_*([B])=0$. Let $\sigma_0: M \m \dot TM_{(0)} $ be the zero section. Let $a$ be the number such that $\tau_*(\sigma_{0*}([M])) =[B]+a[F]$. Note that $\chi(M) =$ $$ \sigma_{0*}([M]) \bullet \sigma_{0*}([M]) = (\tau^{-1}_*([B])+a\tau^{-1}_*([F)])) \bullet (\tau^{-1}_*([B])+a\tau^{-1}_*([F]))=2a$$ and so $a = \chi(M)/2$.

Now let $\sigma$ be an arbitrary degree $k$ section and let $b$ be the number such that $\tau_*(\sigma_*([M])) =[B]+b[F]$.  Note that $k =$ $$\sigma_{0*}([M]) \bullet \sigma_{*}([M]) = (\tau^{-1}_*([B])+ (\chi(M)/2)\tau^{-1}_*([F])) \bullet (\tau^{-1}_*([B])+b\tau^{-1}_*([F]))$$ $$=\chi(M)/2+b.$$ Thus $b = k- \chi(M)/2$. Since $\tau \circ \sigma \in Map_b(M,\s)$, the claim follows.

\end{proof}

We now address the case of non-orientable manifolds.

\begin{proposition}
If $n$ is even and $M$ is not orientable, there is a homotopy equivalence $\Gamma_k(\dot TM_{(0)}) \simeq \Gamma_j(\dot TM_{(0)})$ for all $k$ and $j$ not equal to $\chi(M)/2$.
\label{evenNonOr}
\end{proposition}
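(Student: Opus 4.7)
The plan is to adapt the strategy of Proposition \ref{odd}: construct a fiberwise self-map $f: \dot TM_{(0)} \to \dot TM_{(0)}$ that is a fiberwise homotopy equivalence and whose induced map on section spaces realizes $\Gamma_k \simeq \Gamma_j$. The new complication is that for $n$ even the spaces $\Omega^n_d \s$ are no longer contractible, but non-orientability of $M$ is exactly what makes the relevant obstruction group vanish.

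Concretely, I would set $d := (j - \chi(M)/2)/(k - \chi(M)/2) \in \Q^\times$, which is defined and nonzero by the hypotheses, and then build a fiberwise self-map $f$ of fiberwise degree $d$ fixing the zero section $\sigma_0$. To produce $f$, I would consider the bundle $P \to M$ whose fiber over $m$ is the space of pointed degree-$d$ self-maps of $(\dot T_m M_{(0)}, \sigma_0(m))$. By Serre's rational calculation \cite{Ser}, this fiber is equivalent to $\Omega^n_d \s \simeq K(\Q, n-1)$, so the only obstruction to finding a section of $P$ lies in $H^n(M;\pi_{n-1}(K(\Q,n-1))_{\mathrm{loc}}) = H^n(M;\Q_{\mathrm{loc}})$. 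The monodromy of $P$ factors through degree $\pm 1$ self-maps of $\s$, and these act trivially on $\pi_{n-1}(\Omega^n_d \s) \cong \pi_{2n-1}(\s) = \Q \cdot [\iota_n,\iota_n]$ because Whitehead products are bilinear and the two signs cancel: $[\pm \iota_n, \pm \iota_n] = [\iota_n,\iota_n]$. Hence the local system is untwisted, and $H^n(M;\Q) = 0$ for any closed non-orientable $n$-manifold, so the obstruction vanishes and $f$ exists. Each fiber of $f$ is a nonzero-degree self-map of the rational sphere $\s$, hence a homotopy equivalence, so $f$ is a fiberwise equivalence and the induced map $f_*: \Gamma(\dot TM_{(0)}) \to \Gamma(\dot TM_{(0)})$ is a homotopy equivalence.

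To see that $f_*$ sends $\Gamma_k$ to $\Gamma_j$, I would return to the fibration $\Omega^n \s \to \Gamma(\dot TM_{(0)}) \to \Gamma(\dot TM^{n-1}_{(0)})$ from Proposition \ref{pizero}, basepointed at $\sigma_0$. Since $f$ fixes $\sigma_0$, its action on the fiber $\Omega^n \s$ is post-composition with a degree-$d$ self-map, which on $\pi_0 = \pi_n \s = \Q$ is multiplication by $d$. Under the convention -- pinned down by pulling back to the orientation double cover and invoking Proposition \ref{even}, using $\chi(\widetilde M) = 2\chi(M)$ -- that the zero section corresponds to $\chi(M)/2 \in \pi_0 \Gamma$, this gives the affine action $f_*(k') = d(k' - \chi(M)/2) + \chi(M)/2$, so the choice of $d$ sends $k$ to $j$. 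The main obstacles are the Whitehead-product calculation showing the monodromy on $\pi_{2n-1}(\s)$ is trivial, and verifying that the zero section corresponds to $\chi(M)/2$ in the component lattice $\pi_0 \Gamma = \Q$ when $M$ is non-orientable.
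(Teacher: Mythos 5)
Your proposal diverges from the paper's proof in one crucial design choice, and that choice is where the argument breaks down. You insist that the fiberwise map $f$ fix the zero section $\sigma_0$, so the relevant bundle $P$ has fiber the \emph{pointed} mapping space $\Omega^n_d \s \simeq K(\Q,n-1)$, which has a nonzero $\pi_{n-1}$ and hence a potentially nonzero top obstruction. The paper instead drops the requirement that any section be fixed and uses the bundle with fiber the \emph{free} mapping space $Map_d(\s,\s)$. By the M{\o}ller--Raussen description quoted in Lemma \ref{bundleiso}, $Map_d(\s,\s) \simeq S^{2n-1}_{(0)}$ for $n$ even and $d\ne 0$, which is $(2n-2)$-connected; so every obstruction group $H^i(M;\pi_{i-1}(Map_d(\s,\s)))$ for $i\le n$ vanishes for coefficient reasons, regardless of the local system. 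Uniqueness up to fiberwise homotopy is also free ($\pi_i(Map_d)=0$ for $i\le n$), and the fiberwise-equivalence property comes from $f_d\circ f_{1/d}$ being the unique degree-one bundle map, hence homotopic to the identity. Your approach has to pay for the stronger conclusion (fixing $\sigma_0$) by facing a genuinely nonzero coefficient group.

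The gap is concrete: your claim that the local system on $\pi_{n-1}(\Omega^n_d\s)\cong\pi_{2n-1}(\s)$ is untwisted is incorrect. The monodromy of $P$ is \emph{conjugation} $g\mapsto \phi g\phi^{-1}$ by a transition map $\phi$ of degree $\epsilon=\pm 1$, not merely post-composition. You correctly note that post-composition $\phi_*$ is trivial on $\pi_{2n-1}(\s)=\Q\cdot[\iota,\iota]$ by bilinearity of the Whitehead product ($[\epsilon\iota,\epsilon\iota]=\epsilon^2[\iota,\iota]$), but you have forgotten the pre-composition by $\phi^{-1}$. Under the adjunction $\pi_{n-1}(\Omega^n Y)\cong\pi_{2n-1}(Y)$, pre-composition with a degree-$\epsilon$ self-map of the source $S^n$ becomes pre-composition with the degree-$\epsilon$ map $1_{S^{n-1}}\wedge\phi^{-1}$ of $S^{2n-1}$, which is multiplication by $\epsilon$. (Equivalently: conjugation covers the map $\phi$ on the base of the evaluation fibration $\Omega^n_d\s\to Map_d(\s,\s)\to\s$, and naturality of the connecting homomorphism $\partial_d:\pi_n(\s)\to\pi_{n-1}(\Omega^n_d\s)$, which is an isomorphism, shows the action is $\phi_*=\epsilon$ on $\pi_n(\s)$.) So the conjugation action is multiplication by $\epsilon$, i.e., the local system is twisted by the orientation character $w$. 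For $M$ closed, non-orientable, of dimension $n$, twisted Poincar\'e duality gives $H^n(M;\Q_w)\cong H_0(M;\Q)=\Q\ne 0$, not $0$ as you assert. Thus the obstruction group does not vanish, and your construction of $f$ is not justified. The remainder of your argument (determining $d$ and computing the component shift via the orientation double cover, matching the paper's $q=dk+(1-d)\chi(M)/2$) is correct in outline, but it rests on a bundle map that your obstruction-theoretic argument has not produced.
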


\begin{proof}
For every non-zero $d \in \Q$, one can construct a bundle automorphism $f_d : \dot TM_{(0)} \m \dot TM_{(0)}$ which induces a map of degree $d$ on each fiber. This follows from obstruction theory since the relevant obstructions lie in $H^i(M;\pi_{i-1}(Map_d(\s,\s)))$ for $i \leq n$ and these groups vanish since $Map_d(\s,\s)$ is $(2n-2)$-connected. Since $\pi_{i}(Map_d(\s,\s))$ vanishes for $i \leq n$, we have a unique, up to fiberwise homotopy, bundle map of a given degree. Thus, $f_d \circ f_{1/d}$ is fiberwise homotopic to the identity.

The bundle maps $f_d$ induce homotopy equivalences $\Gamma_k( \dot T M_{(0)}) \m \Gamma_q( \dot T M_{(0)})$ for some number $q\in \Q$. Our goal is to show that $q=dk+(1-d)\chi(M)/2$. If we could establish this, then the bundle maps would induce homotopy equivalences between every component of $\Gamma(\dot TM_{(0)})$ except for the degree $\chi(M)/2$ component. Let $\tilde M$ denote the orientation double cover of $M$. Since the tangent bundle of $\tilde M$ is the pull back of the tangent bundle of $M$, we can lift degree $k$ sections of $\dot T M_{(0)}$ to degree $2k$ sections of $\dot T \tilde M_{(0)}$ and bundle maps $f_d$ to bundle maps $\tilde f_d: \dot T \tilde M_{(0)} \m \dot T \tilde M_{(0)}$ which also induce degree $d$ maps on each fiber. It follows from Proposition \ref{even} that $\tilde f_d$ induces a map $\Gamma_{2k}(\dot T \tilde M_{(0)}) \m \Gamma_{2kd+(1-d) \chi(\tilde M)/2 }(\dot T \tilde M_{(0)})$. Since $\chi(\tilde M)/2= \chi(M)$, composition with $f_d$ gives a map between $\Gamma_k( \dot T M_{(0)})$ and $\Gamma_{dk+(1-d)\chi(M)/2}( \dot T M_{(0)})$. Since these maps are homotopy equivalences, $\Gamma_k(\dot TM_{(0)}) \simeq \Gamma_j(\dot TM_{(0)})$ for all $k$ and $j$ not equal to $\chi(M)/2$.

\end{proof}

Combining Proposition \ref{odd}, Lemma \ref{map}, Proposition \ref{even} and Proposition \ref{evenNonOr}, we get the following corollary.

\begin{theorem}
The rational homology of $\Gamma_k(\dot TM)$ is isomorphic to the rational homology of $\Gamma_j(\dot TM)$ unless $n$ is even and $k$ or $j$ is $\chi(M)/2$.

\label{sec}
\end{theorem}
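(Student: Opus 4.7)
The plan is to reduce the theorem to a statement about the rationally-localized section spaces $\Gamma_k(\dot TM_{(0)})$ and then assemble the three cases already handled by the preceding propositions. As observed in the paragraph preceding Proposition \ref{pizero}, composition with the fiberwise rational localization $l : \dot TM \m \dot TM_{(0)}$ is itself a rational localization on section spaces (by Theorem 5.3 of \cite{Mo}), so $H_*(\Gamma_k(\dot TM); \Q) \cong H_*(\Gamma_k(\dot TM_{(0)}); \Q)$ and likewise for $j$. It therefore suffices to produce, under the hypotheses of the theorem, a homotopy equivalence $\Gamma_k(\dot TM_{(0)}) \simeq \Gamma_j(\dot TM_{(0)})$.

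I would then split into three cases according to the parity of $n$ and, when $n$ is even, the orientability of $M$. If $n$ is odd, Proposition \ref{odd} immediately gives that any two path components of $\Gamma(\dot TM_{(0)})$ are homotopy equivalent, with no exceptions. If $n$ is even and $M$ is non-orientable, Proposition \ref{evenNonOr} directly supplies the desired equivalence for all $k, j \ne \chi(M)/2$. In the remaining case ($n$ even, $M$ orientable), I would combine Proposition \ref{even} with Lemma \ref{map}: the former gives homeomorphisms $\Gamma_k(\dot TM_{(0)}) \cong Map_{k-\chi(M)/2}(M, \s)$ and $\Gamma_j(\dot TM_{(0)}) \cong Map_{j-\chi(M)/2}(M, \s)$, while the latter provides a homotopy equivalence between these mapping spaces whenever the two shifted degrees $k - \chi(M)/2$ and $j - \chi(M)/2$ are both nonzero. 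This condition is exactly $k, j \ne \chi(M)/2$.

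There is no substantive obstacle since every ingredient is already in place; the only thing to keep straight is the degree bookkeeping across the cases. In particular, the exceptional component $k = \chi(M)/2$ is ruled out in the orientable even case precisely because Lemma \ref{map} requires nonzero degree, and in the non-orientable even case by the hypothesis of Proposition \ref{evenNonOr}; in the odd case $\chi(M) = 0$ for any closed odd-dimensional manifold, so no component is singled out, consistent with the statement of the theorem.
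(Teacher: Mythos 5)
Your proof is correct and follows exactly the route the paper intends: reduce to the fiberwise-localized section spaces via Theorem 5.3 of \cite{Mo}, then invoke Proposition~\ref{odd} (for $n$ odd), Proposition~\ref{evenNonOr} (for $n$ even, non-orientable), and Proposition~\ref{even} together with Lemma~\ref{map} (for $n$ even, orientable). The paper states this as a one-line combination of those same four results, so there is no substantive difference.
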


Combining Corollary \ref{scan} and Theorem \ref{sec}, we deduce homological stability for configuration spaces of points in a closed manifold. 

\begin{corollary}
The homology groups $H_i(C_k(M);\Q)$ are equal to those of $H_i(C_j(M);\Q)$ if $i \leq min(k/2,j/2)$ and $k,j \neq \chi(M)/2$. Moreover, an isomorphism is given by traversing the following diagram:
$$
\begin{array}{ccccccccl}
C_{k}(M) & \overset{s}{\m} &  \Gamma_k(\dot TM) & \overset{l}{\m} & \Gamma_k(\dot TM_{(0)})  \\

  &   & &   &  \downarrow \simeq            \\

C_{j}(M) & \overset{s}{\m} &  \Gamma_j(\dot TM) & \overset{l}{\m} & \Gamma_{j}(\dot TM_{(0)}) .\\ 

\end{array}$$ 

\end{corollary}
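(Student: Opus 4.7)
The plan is to chase the zig-zag diagram displayed in the statement, checking at each stage that the indicated map induces an isomorphism on $H_i(-;\Q)$ under the stated hypotheses. Since every ingredient has already been supplied earlier in the paper, this amounts to assembling them with attention to the degree bookkeeping.

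First I would apply Corollary \ref{scan} to both rows: the scanning map $s:C_k(M) \m \Gamma_k(\dot TM)$ is an integral (hence rational) homology isomorphism for $i \leq k/2$, and likewise $s:C_j(M) \m \Gamma_j(\dot TM)$ is a rational homology isomorphism for $i \leq j/2$. Restricting to the common range $i \leq \min(k/2,j/2)$ makes both horizontal scanning maps into rational homology isomorphisms simultaneously.

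Next I would invoke the fiberwise rationalization $l : \dot TM \m \dot TM_{(0)}$. As explained at the beginning of Section 3, the induced map on section spaces is itself a rationalization (this uses Theorem 5.3 of \cite{Mo}), so $l_* : H_*(\Gamma_k(\dot TM);\Q) \m H_*(\Gamma_k(\dot TM_{(0)});\Q)$ is an isomorphism in every degree, and similarly for the index $j$. Finally, Theorem \ref{sec} supplies the vertical equivalence: provided $k,j \neq \chi(M)/2$ (a condition that is vacuous when $n$ is odd by Proposition \ref{odd}, and established by Proposition \ref{even} or Proposition \ref{evenNonOr} when $n$ is even), there is an actual homotopy equivalence $\Gamma_k(\dot TM_{(0)}) \simeq \Gamma_j(\dot TM_{(0)})$, and in particular an isomorphism on $H_*(-;\Q)$.

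Composing the three isomorphisms around the zig-zag in the diagram yields the claimed isomorphism $H_i(C_k(M);\Q) \cong H_i(C_j(M);\Q)$. There is no genuine obstacle here; the only point to check carefully is that the scanning map $s$ from the $k$-fold configuration space does land in the degree $k$ component $\Gamma_k(\dot TM)$, so that the vertical equivalence from Theorem \ref{sec} can be applied between the correct components. Everything else is direct concatenation of the results already proved in Sections 2 and 3.
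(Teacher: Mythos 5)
Your proof is correct and follows essentially the same route the paper intends: the paper states the corollary as a direct combination of Corollary \ref{scan} and Theorem \ref{sec}, and you have simply unpacked what that combination means by tracking the three legs of the zig-zag (scanning, fiberwise rationalization, and the homotopy equivalence of components of $\Gamma(\dot TM_{(0)})$ from Propositions \ref{odd}, \ref{even}, \ref{evenNonOr}). No gap; the bookkeeping about degrees and the restriction to the common range $i \leq \min(k/2,j/2)$ is exactly the point to be careful about, and you handle it.
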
 

\begin{remark}
Since the theorems of Church in \cite{Ch} and Randal-Williams \cite{RW} apply equally well to the component $C_{\chi(M)/2}$, one can rephrase the results of this section as follows. For any $k \in \Z$ and an orientable $n$-manifold $M$ of even Euler characteristic, the groups $H_i(Map_{0}(M,S^n);\Q)$ are isomorphic to $H_i(Map_{k}(M,S^n);\Q)$ for $i < \chi(M)/2$. If $n>2$, then the range can be extended to all $i \leq \chi(M)/2$ \cite{RW}. Also note that for $n$ odd, all components of $Map(M,S^n)$ are rationally homotopic.

\end{remark}

\section{Torsion}

In this section, we describe how to modify the arguments of the previous section to compare the torsion in the homology of components of spaces of sections or configuration spaces. First we discuss the connectivity of the spaces of self maps of $p$-local spheres. Then we consider the case when $n$ is odd. Following that, we describe when the $p$-torsion of $Map_k(M,S^n)$ is isomorphic to the $p$-torsion of $Map_j(M,S^n)$ and give a method for comparing the $p$-torsion in the homology of spaces of maps and spaces of sections. Finally, we draw new conclusions about stability for torsion in the homology of configuration spaces of particles in closed manifolds. Let $\Z_{(p)}$ denote the $p$-local integers, $S^n_{(p)}$ the $p$-local $n$-sphere and $\dot TM_{(p)}$ the fiberwise $p$-localization. As in the rational case, Theorem 5.3 of \cite{Mo} implies that fiberwise $p$-localization induces a localization map on spaces of sections. Using a similar argument to those used to prove Proposition \ref{pizero}, we see that $\pi_0(\Gamma(\dot TM_{(p)}))$ can be identified with $\Z_{(p)}$.

Many facts about rational localizations of spaces are also true for $p$-localizations for $p$ sufficiently large. For example, in the previous section, we often used the fact that $\Omega_1^n \s$ and $Map_1(\s,\s)$ are highly connected. This generalizes as follows.

\begin{proposition}
Let $p \geq n/2+3/2$ be a prime. If $n$ is odd then $\Omega_1^n \Sp$ is $(n-1)$-connected. If $n$ is even, then $Map_1(\Sp,\Sp)$ is $(n-1)$-connected.
\label{torCon}
\end{proposition}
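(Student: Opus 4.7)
The plan is to reduce both statements to vanishing of $p$-primary homotopy groups of spheres in a small range, using Serre's classical computation and, in the even case, James's $p$-local splitting of $\Omega \Sp$ together with the evaluation fibration.

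For $n$ odd the argument is essentially immediate. Since $\Omega_1^n \Sp$ is a path component of the $H$-space $\Omega^n \Sp$, one has $\pi_k(\Omega_1^n \Sp) \cong \pi_{n+k}(S^n)_{(p)}$ for $k \geq 1$. Serre's result on the $p$-primary homotopy of odd spheres gives $\pi_j(S^n)_{(p)} = 0$ for $n < j < n + 2p - 3$. For $n$ odd, the hypothesis $p \geq n/2 + 3/2$ is equivalent to $2p - 3 \geq n$, so this vanishing range covers $j = n+1, \dots, 2n-1$, and $\Omega_1^n \Sp$ is $(n-1)$-connected.

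For $n$ even I would begin with the evaluation fibration
\begin{equation*}
\Omega_1^n \Sp \m Map_1(\Sp, \Sp) \m \Sp.
\end{equation*}
Since $\pi_k(\Sp) = 0$ for $0 < k < n$, its long exact sequence identifies $\pi_k(Map_1(\Sp, \Sp))$ with $\pi_{n+k}(S^n)_{(p)}$ for $1 \leq k \leq n - 2$, and with the cokernel of a connecting map $\partial \colon \pi_n(\Sp) = \Z_{(p)} \to \pi_{2n-1}(S^n)_{(p)}$ when $k = n-1$. To control $\pi_{n+k}(S^n)_{(p)}$, invoke James's $p$-local splitting $\Omega \Sp \simeq S^{n-1}_{(p)} \times \Omega S^{2n-1}_{(p)}$, which is valid at odd primes for $n$ even. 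This yields
\begin{equation*}
\pi_{n+k}(S^n)_{(p)} \cong \pi_{n+k-1}(S^{n-1})_{(p)} \oplus \pi_{n+k}(S^{2n-1})_{(p)}.
\end{equation*}
For $1 \leq k \leq n - 2$ the second summand vanishes by connectivity of $S^{2n-1}$, and the first vanishes by Serre applied to the odd sphere $S^{n-1}$, using that $p \geq n/2 + 3/2$ and $p \in \Z$ force $2p - 3 \geq n + 1$.

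It remains to handle $k = n-1$. Here the James splitting reduces $\pi_{2n-1}(S^n)_{(p)}$ to the $\Z_{(p)}$ summand coming from $\pi_{2n-1}(S^{2n-1})_{(p)}$. The connecting map of the evaluation fibration sends the canonical generator of $\pi_n(\Sp)$ to the Whitehead square $[\iota_n, \iota_n]$, which for $n$ even has Hopf invariant $\pm 2$. Since $2$ is a unit at odd $p$, this element generates $\pi_{2n-1}(S^n)_{(p)}$, so $\partial$ is surjective and $\pi_{n-1}(Map_1(\Sp, \Sp)) = 0$. The main subtlety is matching the image of $[\iota_n, \iota_n]$ with the distinguished $\Z_{(p)}$ factor produced by the James splitting; once that identification is made, everything else is a routine application of the long exact sequence and Serre's theorem.
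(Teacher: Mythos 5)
Your argument is correct and follows essentially the same route as the paper: Serre's torsion bound for the odd case, and for the even case the evaluation fibration $\Omega_1^n \Sp \to Map_1(\Sp,\Sp) \to \Sp$ together with the observation that the connecting map in degree $n-1$ hits the Whitehead square $[\iota,\iota]$, whose Hopf invariant $\pm 2$ is a $p$-local unit. The one organizational difference is in how the $p$-primary vanishing of $\pi_{n+k}(S^n)$ for $n$ even is established: you invoke James's odd-primary splitting $\Omega \Sp \simeq S^{n-1}_{(p)} \times \Omega S^{2n-1}_{(p)}$ to reduce to Serre's theorem for the odd sphere $S^{n-1}$, whereas the paper works from the rational statement that $Map_1(\s,\s)$ is $(n-1)$-connected and simply cites Serre for the absence of low-degree $p$-torsion in $\pi_*(\Omega^n_1 S^n)$. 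Your version makes the even-sphere torsion count explicit and also makes transparent why $[\iota,\iota]$ generates the relevant $\Z_{(p)}$ summand (it is the EHP/Hopf-invariant projection), so it is a somewhat more self-contained rendering of the same proof rather than a genuinely different one.
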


\begin{proof}
First consider the case that $n$ is odd. Recall $\Omega_1^n \s$ is weakly contractible since $\s \simeq K(\Q,n)$. Thus, to prove $\Omega_1^n \Sp$ is $(n-1)$-connected, it suffices to prove that $\pi_i(\Omega^n S^n)$ has no $p$ torsion for $i \leq n-1$ and $p \geq n/2+3/2$. This follows by Serre's calculation in \cite{Ser} of the first time $p$-torsion appears in the homotopy groups of spheres. 

Now assume that $n$ is even. Since $Map_1(\s,\s)$ is $(n-1)$-connected, it suffices to prove $\pi_i(Map_1(S^n,S^n))$ has no $p$-torsion for $i \leq n-1$ and $p \geq n/2+3/2$. Consider the following fibration: $$ \Omega^n_1 \Sp \m Map_1(\Sp,\Sp) \m \Sp.$$ The homotopy groups $\pi_i(\Sp)$ vanish $i \leq n-1$. Again by Serre's work in \cite{Ser}, $\pi_i(\Omega^n_1 S^n)$ are $p$-torsion free for $i \leq n-1$. Since $\pi_i(\Omega^n_1 \s)$ is zero for $i \leq n-2$ and $\pi_{n-1}(\Omega^n_1 \s) =\Q$ , we have that $\pi_i(\Omega^n_1 \Sp)=0$ for $i \leq n-2$ and $\pi_{n-1}(\Omega^n_1 \Sp) =\Zp$. From this, we can conclude that $\pi_i(Map_1(\Sp,\Sp))=0$ for $i \leq n-2$. To see that $\pi_{n-1}(Map_1(\Sp,\Sp))=0$, consider the following portion of the long exact sequence of homotopy groups associated to the above fibration: $$\pi_{n}(\Sp) \m \pi_{n-1}(\Omega_1^n \Sp) \m \pi_{n-1}(Map_1(\Sp,\Sp)) \m \pi_{n-1}(\Sp).$$ Note that $\pi_n(\Sp)=\pi_{n-1}(\Omega^n_1 \Sp) =\Zp$ and $\pi_{n-1}(\Sp)=0$. Thus, the vanishing of $\pi_{n-1}(Map(\Sp,\Sp))$ is equivalent to the surjectivity of the connecting homomorphism $\delta: \pi_n(\Sp) \m \pi_{n-1}(\Omega^n_1 \Sp)$. 

Let $G \in \pi_{n-1}(\Omega^n S^n)$ generate a $\Z$ summand. Let $\iota$ denote the generator of $\pi_n(S^n)$.  Let $k$ be the number such that $kG = \delta(\iota)$  modulo torsion. By Theorem 3.2 of \cite{W1}, the map $\delta$ is equal to the map which sends an element $\kappa \in \pi_n(S^n)$ to the Whitehead product of $\kappa$ with $\iota$, $[\kappa,\iota]\in \pi_{n-1}(\Omega^n_1 S^n)$. Thus $\delta(\iota)=[\iota,\iota]$. The Hopf invariant homomorphism $h:\pi_{n-1}(\Omega^n_1 S^n) \m \Z$ sends $[\iota,\iota]$ to $\pm 2$ so $k=\pm 1$ or $\pm 2$. Since $p>2$, $2$ is invertible in $\Zp$ and so the connecting homomorphism $\delta: \pi_n(\Sp) \m \pi_{n-1}(\Omega^n_1 \Sp)$ is surjective. This completes the proof.

\end{proof}

The following lemma is a substitute for the fact that, after rationalizing, there is a unique up to homotopy bundle map of a given fiber degree.

\begin{lemma}
Let $p \geq n/2+3/2$ be a prime, $M$ an $n$-manifold, $E \m X$ be an $\Sp$-bundle. Let $f:E \m E$ be a map of fiber bundles that induces a degree $1$ map on each fiber. If $n$ is odd, make the additional assumption that $f$ fixes some section $\sigma: M \m E$. Then $f$ is a fiberwise homotopy equivalence. Moreover, there is some number $N$ such that $f^N$ is fiberwise homotopic to the identity.
\label{unique}
\end{lemma}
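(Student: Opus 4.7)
My plan is to handle the two claims separately. I will deduce the first (fiberwise homotopy equivalence) from Dold's theorem on fibrewise equivalences, and prove the second (that some iterate of $f$ is fiberwise homotopic to the identity) by showing that the monoid of fiberwise homotopy classes of degree one fiberwise self-maps of $E$ (fixing $\sigma$ when $n$ is odd) is a finite group, in which $f$ necessarily has finite order.

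For the first part, $E\to M$ is a Hurewicz fibration over a paracompact base, and on each fiber $\Sp$ the map $f$ is a degree one self-map of a simply-connected CW space, hence a homotopy equivalence by Whitehead's theorem. Dold's theorem then promotes this fiberwise weak equivalence to a fiberwise homotopy equivalence.

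For the second part, fiberwise self-maps of $E$ of fiber degree one (respectively those fixing $\sigma$) correspond to sections of a bundle $P\to M$ whose fiber is $Map_1(\Sp,\Sp)$ (respectively $\Omega^n_1\Sp$ based at $\sigma(m)$). By Proposition \ref{torCon} this fiber is $(n-1)$-connected, and since $M$ is $n$-dimensional, standard obstruction theory (with local coefficients twisted by the $\pi_1(M)$-action) puts $\pi_0(\Gamma(P))$ in bijection with $H^n(M;\pi_n(\text{fiber}))$; only the primary obstruction contributes, because $M$ has no cells above dimension $n$. By the first part, every element of $\pi_0(\Gamma(P))$ is represented by a fiber homotopy equivalence, so composition endows $G := \pi_0(\Gamma(P))$ with a group structure.

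It remains to show that $\pi_n$ of the fiber is finite. In the odd case $\pi_n(\Omega^n_1\Sp)=\pi_{2n}(\Sp)$, finite because $\pi_{2n}(S^n)\otimes\Q=0$ by Serre. In the even case, the long exact sequence of $\Omega^n_1\Sp \to Map_1(\Sp,\Sp) \to \Sp$, combined with the fact established inside the proof of Proposition \ref{torCon} that the connecting map $\delta\colon\pi_n(\Sp)\to\pi_{n-1}(\Omega^n_1\Sp)$ is an isomorphism (both groups are $\Zp$ and $\delta$ is multiplication by $\pm 2$, a $p$-local unit for odd $p$), forces $\pi_n(Map_1(\Sp,\Sp))$ to be a quotient of $\pi_n(\Omega^n_1\Sp)=\pi_{2n}(\Sp)$, again finite by Serre for $n$ even. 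Hence $H^n(M;\pi_n(\text{fiber}))$ and $G$ are finite, so $f$ has finite order. The main technical point I foresee is formalizing the obstruction-theoretic identification of $\pi_0(\Gamma(P))$ with a finite cohomology group through twisted coefficients, but since only the cardinality is needed this should not pose a serious obstacle.
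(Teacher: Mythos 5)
Your argument is correct in outline and takes a route that genuinely differs from the paper's, so a comparison is worthwhile. Both proofs hinge on the same auxiliary bundle $P$ (fiber $Map_1(\Sp,\Sp)$ when $n$ is even, $\Omega^n_1 \Sp$ when $n$ is odd) and both lean on Proposition~\ref{torCon} for its $(n-1)$-connectivity, but the rest diverges. You prove the first claim (fiberwise homotopy equivalence) independently, via Whitehead plus Dold's theorem; the paper instead obtains it as a corollary of the torsion claim, since $f^N\simeq\mathrm{id}$ forces $f^{N-1}$ to be an inverse. For the torsion claim you identify $\pi_0(\Gamma(P))$ with $H^n(M;\pi_n(\text{fiber}))$ by primary obstruction theory over the $n$-dimensional complex $M$, then show the coefficient group is finite by re-using the connecting-map computation from the proof of Proposition~\ref{torCon}. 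The paper instead takes a CW structure on $M$ with a single $n$-cell, extracts a surjection of monoids $\pi_0(\Omega^n P_m)\twoheadrightarrow\pi_0(\Gamma(P))$ from the restriction fibration $\Gamma(P)\to\Gamma(P^{n-1})$, identifies composition with loop sum by Eckmann--Hilton so that $\pi_0(\Omega^n P_m)$ is an honest group, and sees it is torsion because $P_m$ is rationally $n$-connected (using the M{\o}ller--Raussen identification $Map_1(\s,\s)\simeq S^{2n-1}_{(0)}$ rather than redoing the Hopf-invariant/Whitehead-product argument). Your route is more self-contained at the level of obstruction theory; the paper's is cleaner in that Dold's theorem is never needed and the group structure on $\pi_0(\Gamma(P))$ is free.

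One step you should tighten: the assertion that, since every section is a fiber homotopy equivalence, ``composition endows $G:=\pi_0(\Gamma(P))$ with a group structure.'' In the even case this is fine, because a Dold inverse of a degree-$1$ bundle map automatically has fiber degree $1$, and the witnessing homotopies preserve degree, so everything stays inside $\Gamma(P)$. In the odd case $\Gamma(P)$ consists of degree-$1$ bundle maps \emph{fixing $\sigma$}, and the unpointed version of Dold's theorem does not promise that the inverse, let alone the homotopies $gf\simeq\mathrm{id}$ and $fg\simeq\mathrm{id}$, respect $\sigma$. To make this line work you would need the ex-space (sectioned) version of Dold's theorem, or the paper's Eckmann--Hilton argument, or the following bypass: since $\pi_0(\Gamma(P))$ is finite, $[f]^a=[f]^{a+k}$ for some $a,k\geq 1$, so $f^a\simeq f^{a+k}$ as (in particular unbased) fiber homotopies; composing with an unbased Dold inverse of $f^a$ then gives $f^k\simeq\mathrm{id}$ fiberwise, which is exactly the conclusion of the lemma. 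So the gap is easily repaired, but as written the claim that $G$ is a group in the odd case is not justified.
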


\begin{proof}
For $n$ even, let $P$ be the bundle whose fiber over a point $m \in M$ is $Map_1(E_m,E_m)$. For $n$ odd, let $P$ be the bundle with fiber the subspace of maps fixing the section $\sigma$. These assemble to form a bundle of monoids with respect to composition and this product gives $\pi_0(\Gamma(P))$ the structure of a monoid. We will prove that this monoid is in fact a torsion group. 

Give $M$ a $CW$-structure with one $n$-cell. Let $M_{n-1}$ denote the $n-1$ skeleton of $M$ and $P^{n-1}$ the restriction of $P$ to $M_{n-1}$. By Proposition \ref{torCon}, the fibers of $P$ are $(n-1)$-connected and so $\pi_0(\Gamma(P^{n-1}))=1$. The inclusion $M_{n-1} \m M$ is a cofibration with cofiber homotopic to $S^n$. Thus the restriction map $r:\Gamma(P) \m \Gamma(P^{n-1})$ is a fibration. All of the fibers of $r$ can be identified with $\Omega^n P_m$ (the fiber of $P$ at a fixed point $m \in M$) since the fibers of $P$ are $(n-1)$-connected. Thus the following sequence is an exact sequence of monoids: $$ \pi_0(\Omega^n P_m) \m \pi_0(\Gamma(P)) \m 1.$$  The monoid structure on the set $\pi_0(\Omega^n P_m)$ making $\pi_0(\Omega^n P_m) \m \pi_0(\Gamma(P))$ a map of monoids is the monoid structure induced by the monoid structure on $P_m$. Loop sum also gives the set $\pi_0(\Omega^n P_m)$ is a monoid structure. By the Eckmann-Hilton argument, these two monoid structures agree. Note that the rational localization of $P_m$ is $\Omega_1^n \s$ for $n$ odd and $Map_1(\s,\s)$ for $n$ even. Since $P_m$ is rationally $n$-connected, $\pi_{n}(P_m)=\pi_0(\Omega^n P_m)$ is a torsion group. It follow by exactness of the sequence that  $\pi_0(\Gamma(P))$ is also a torsion group. Thus $f$ is homotopy idempotent and hence a homotopy equivalence.

\end{proof}

We can now state a sufficient condition for the path components of $\Gamma(\dot TM_{(p)})$ to be homotopic.

\begin{proposition}
If $n$ is odd and $p \geq n/2+3/2$, the path components of $\Gamma(\dot TM_{(p)})$ are all homotopic. 
\label{oddSec}
\end{proposition}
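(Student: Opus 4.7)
The strategy is to mirror the proof of Proposition \ref{odd}, substituting Proposition \ref{torCon} for the contractibility of $\Omega_1^n K(\Q,n)$ and Lemma \ref{unique} for the rational uniqueness of fiberwise degree-one bundle maps.

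Given two sections $\sigma_1, \sigma_2$ of $\dot TM_{(p)}$, I would form the bundle $P \to M$ whose fiber $F_m$ over $m \in M$ is the space of degree-one self maps of $\Sp \cong (\dot TM_{(p)})_m$ sending $\sigma_1(m)$ to $\sigma_2(m)$. The first step is to show $F_m$ is $(n-1)$-connected. Up to homotopy equivalence, $F_m$ is the space of based degree-one self maps $Map_{*,1}(\Sp,\Sp)$ (using translation by an H-space structure on $\Sp$, available for $n$ odd and $p \geq n/2+3/2$). Since $\Sp$ is $p$-local, the localization map $S^n \to \Sp$ induces a weak equivalence $Map_{*,1}(\Sp,\Sp) \simeq Map_{*,1}(S^n, \Sp) = \Omega_1^n \Sp$, which is $(n-1)$-connected by Proposition \ref{torCon}.

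Since $\dim M = n$ and the fibers of $P$ are $(n-1)$-connected, obstruction theory produces a section of $P$, that is, a fiberwise degree-one bundle map $f : \dot TM_{(p)} \to \dot TM_{(p)}$ with $f \circ \sigma_1 = \sigma_2$. The same construction with $\sigma_1$ and $\sigma_2$ interchanged produces a bundle map $g$ with $g \circ \sigma_2 = \sigma_1$. Both $g \circ f$ and $f \circ g$ are fiberwise degree-one bundle maps fixing a section ($\sigma_1$ and $\sigma_2$ respectively), so Lemma \ref{unique} applies and gives an $N$ with $(g \circ f)^N$ and $(f \circ g)^N$ fiberwise homotopic to the identity. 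Hence compositions with $g \circ f$ and $f \circ g$ induce homotopy equivalences on the respective path components of $\Gamma(\dot TM_{(p)})$ containing $\sigma_1$ and $\sigma_2$; a standard formal argument, namely that if $gf$ and $fg$ are both homotopy equivalences then so is $f$, shows that composition with $f$ is itself a homotopy equivalence between these two components.

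The main obstacle is the first step: the $(n-1)$-connectivity of $F_m$. In the rational setting this was immediate from $S^n_{(0)}$ being an Eilenberg--MacLane space, but the $p$-local analysis requires three separate inputs: the H-space structure on $\Sp$ (to translate to based maps), the $p$-locality of $\Sp$ (to replace the domain $\Sp$ by $S^n$), and Proposition \ref{torCon} (to control the resulting $\Omega_1^n \Sp$).
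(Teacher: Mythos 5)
Your proof is correct and follows essentially the same strategy as the paper's: identify the fiber of the bundle of pointed degree-one maps with $\Omega^n_1 \Sp$, invoke Proposition~\ref{torCon} to kill the obstruction groups $H^i(M;\pi_{i-1}(\Omega_1^n \Sp))$ for $i \leq n$, and use Lemma~\ref{unique} to conclude the resulting bundle map is a fiberwise homotopy equivalence. One small point in your favor: the paper's proof applies Lemma~\ref{unique} directly to $f$, but (for $n$ odd) Lemma~\ref{unique} requires the map to fix a section, which $f$ need not do since it sends $\sigma_1$ to $\sigma_2$; your detour through $g \circ f$ and $f \circ g$, each of which does fix a section, together with the two-out-of-six argument, cleanly closes this gap.
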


\begin{proof}
The proof follows the proof of Proposition \ref{odd}. Let $\sigma_1$ and $\sigma_2$ be two sections of $\dot TM_{(p)}$. The obstructions to finding a degree one bundle map $f:\dot TM_{(p)} \m \dot TM_{(p)}$ taking $\sigma_1$ to $\sigma_2$ lie in $H^i(M;\pi_{i-1}(\Omega_1^n S^n_{(p)}))$ for $i \leq n$. Thus the obstructions vanish and one can construct a degree one bundle map taking $\sigma_1$ to $\sigma_2$. By Lemma \ref{unique}, these bundle maps are fiberwise homotopy equivalences. Thus the path components of $\Gamma(\dot TM_{(p)})$ containing $\sigma_1$ and $\sigma_2$ are homotopic.
\end{proof}

The proof of Lemma \ref{map} works with minimal modification to show the following proposition. 

\begin{proposition} If $k/j$ is a unit in $\Z_{(p)}$, then $Map_k(M,S_{(p)}^n)$ and $Map_j(M,S_{(p)}^n)$ are homotopic.

\label{mapTor}
\end{proposition}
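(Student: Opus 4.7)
The plan is essentially to recycle the proof of Lemma \ref{map}, replacing $\s$ throughout with $\Sp$ and $\Q$ with $\Z_{(p)}$. The hypothesis that $k/j$ be a unit in $\Z_{(p)}$ is exactly what is needed so that both $k/j$ and its inverse $j/k$ lie in $\Z_{(p)}$, which is the set of degrees realizable by self-maps of $\Sp$.

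First I would note that $[\Sp,\Sp] \cong \pi_n(\Sp) \cong \Z_{(p)}$, so self-maps of $\Sp$ are classified up to homotopy by their degree in $\Z_{(p)}$; in particular, any degree one self-map of $\Sp$ is homotopic to the identity. Next, using that $k/j \in \Z_{(p)}^\times$, I would choose $f: \Sp \m \Sp$ of degree $j/k$ and $g: \Sp \m \Sp$ of degree $k/j$. Since degree is multiplicative under composition, $f \circ g$ and $g \circ f$ are degree one self-maps of $\Sp$, hence both are homotopic to the identity.

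Finally, post-composition with $f$ sends a degree $k$ map $M \m \Sp$ to a map of degree $(j/k) \cdot k = j$, yielding a map $Map_k(M,\Sp) \m Map_j(M,\Sp)$ whose homotopy inverse is post-composition with $g$. This gives the desired homotopy equivalence. There is no real obstacle here: the only departure from the rational case is the need to invoke $[\Sp,\Sp] \cong \Z_{(p)}$ in place of the simpler statement $\s \simeq K(\Q,n)$ available for odd $n$, and to verify that the arithmetic of degrees remains in $\Z_{(p)}$, which is exactly the role of the unit hypothesis.
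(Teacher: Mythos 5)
Your proposal is correct and is exactly the argument the paper has in mind: the paper itself just states that ``the proof of Lemma~\ref{map} works with minimal modification,'' and your proof spells out that modification, with the unit hypothesis on $k/j$ ensuring the needed degrees $j/k$ and $k/j$ lie in $\Z_{(p)}\cong[\Sp,\Sp]$. No discrepancies to note.
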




The above proposition immediately applies to configuration spaces of particles in parallelizable manifolds. To use it to study non-parallelizable manifolds, we will need to adapt Lemma \ref{bundleiso}, Proposition \ref{even} and Proposition \ref{evenNonOr} to the case of localizing away from a prime $p$. First we generalize Lemma \ref{bundleiso}.

\begin{proposition} If $p \geq n/2 +3/2$ and $M$ is orientable, then $\dot TM_{(p)}\m M$ is the trivial $\Sp$-bundle. If $M$ is an orientable surface, then $\dot TM \m M$ is also trivial.
\label{tortrivial}
\end{proposition}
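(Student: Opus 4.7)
The plan is to imitate the proof of Lemma~\ref{bundleiso}, using fiberwise $p$-localization in place of rational localization and Proposition~\ref{torCon} for connectivity.

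For the first statement, I would start from the fact that orientability of $M$ reduces the structure group of $\dot TM$ to $Map_1(S^n,S^n)$, so fiberwise $p$-localization gives $\dot TM_{(p)}$ structure group $Map_1(\Sp,\Sp)$ and a classifying map $f:M \m BMap_1(\Sp,\Sp)$. The next step is to verify that $Map_1(\Sp,\Sp)$ is $(n-1)$-connected. When $n$ is even this is precisely Proposition~\ref{torCon}. When $n$ is odd, Proposition~\ref{torCon} only gives the corresponding statement for $\Omega^n_1 \Sp$, and I would upgrade it via the evaluation fibration
$$\Omega^n_1 \Sp \m Map_1(\Sp,\Sp) \m \Sp,$$
whose fiber is $(n-1)$-connected by Proposition~\ref{torCon} and whose base $\Sp$ is $(n-1)$-connected. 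Hence $BMap_1(\Sp,\Sp)$ is $n$-connected, and since $M$ is an $n$-dimensional CW complex, the map $f$ is null-homotopic and $\dot TM_{(p)}$ is trivial.

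For the second statement ($n=2$, integral), I would identify $\dot TM$ as an $S^2$-bundle with the unit sphere bundle of $TM \oplus \mathbb{R}$ via fiberwise inverse stereographic projection. This exhibits $\dot TM$ as associated to a rank-$3$ oriented real vector bundle over an orientable surface, classified by $[M,BSO(3)]$. Since $BSO(3)$ is simply connected with $\pi_2(BSO(3))=\pi_1(SO(3))=\Z/2$, the only obstruction to triviality over a $2$-complex lives in $H^2(M;\Z/2)$ and equals $w_2(TM\oplus\mathbb{R})=w_2(TM)$. Since $w_2$ of an oriented rank-$2$ real bundle is the mod-$2$ reduction of its Euler class, this is $\chi(M)\bmod 2$, which vanishes because $\chi(M)=2-2g$ is even for an orientable closed surface.

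The only nontrivial move is the odd-$n$ connectivity deduction in Part~1, which is immediate once the evaluation fibration is in hand; Part~2 reduces to a classical characteristic-class computation. The main conceptual point is that Proposition~\ref{torCon} plays the role that the calculations $Map_d(\s,\s)\simeq\s$ (odd case) and $Map_d(\s,\s)\simeq S^{2n-1}_{(0)}$ (even, $d\neq 0$) played in Lemma~\ref{bundleiso}; the extra hypothesis $p\geq n/2+3/2$ is precisely what is needed to port that input to the $p$-local setting.
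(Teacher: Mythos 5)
Your Part 1 matches the paper's argument: reduce the structure group to $Map_1(\Sp,\Sp)$, invoke Proposition~\ref{torCon} to get $(n-1)$-connectivity, conclude $BMap_1(\Sp,\Sp)$ is $n$-connected and so the classifying map is null. In fact you supply a detail the paper elides: for $n$ odd the paper waves at ``a straightforward generalization of Proposition~\ref{torCon}'' and moves on, whereas you actually extract the connectivity of $Map_1(\Sp,\Sp)$ from that of $\Omega^n_1\Sp$ via the evaluation fibration $\Omega^n_1\Sp \m Map_1(\Sp,\Sp) \m \Sp$. That is the right way to fill the gap.

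Your Part 2 takes a genuinely different route from the paper. The paper works directly with $BMap_1(S^2,S^2)$: it observes this space is simply connected, so $\bigl[M,BMap_1(S^2,S^2)\bigr]$ is a quotient of $\pi_2(BMap_1(S^2,S^2))$, cites Hu's computation $\pi_2(BMap_1(S^2,S^2))=\Z/2$, and then shows the fiberwise one-point compactification map $\bigl[M,BSO(2)\bigr]\cong\Z \m \bigl[M,BMap_1(S^2,S^2)\bigr]$ kills the class of $TM$ because its Euler number $\chi(M)$ is even. You instead identify $\dot TM$ with the unit sphere bundle of $TM\oplus\R$, pass to $BSO(3)$, and compute the sole obstruction $w_2(TM\oplus\R)=w_2(TM)=e(TM)\bmod 2$, which vanishes because $\chi(M)=2-2g$ is even. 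Both arguments turn on the same parity fact. Yours replaces Hu's homotopy-theoretic input by the standard identity $w_2 = e \bmod 2$ for oriented rank-two bundles, making the argument more self-contained and elementary, at the small cost of stepping outside the $Map_1$-structure-group framework the paper has been using throughout. Both are correct.
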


\begin{proof}
First assume that $p \geq n/3 +3/2$ and $n$ is even. By Proposition \ref{torCon}, $Map_1(S^n_{(p)},S^n_{(p)})$ is $(n-1)$-connected and so $BMap_1(S^n_{(p)},S^n_{(p)})$ is $n$-connected. Thus, the map classifying $\dot TM_{(p)}$ is null-homotopic. If $n$ is odd, a straightforward generalization of Proposition \ref{torCon} gives the result. However, since we will only use Proposition \ref{tortrivial} for $n$ even, we will not provide details for $n$ odd.

Now let $n=2$. Since $BMap_1(S^2,S^2)$ is simply connected, any map $M \m BMap_1(S^2,S^2)$ is homotopically trivial when restricted to the $1$-skeleton. This shows that the set of based maps up to homotopy, $\big [M,BMap_1(S^2,S^2) \big ]$, is isomorphic to a quotient of the group $\big [ S^2, BMap_1(S^2,S^2) \big ] = \pi_2(BMap_1(S^2,S^2))$. By the work of Hu in \cite{Hu}, $\pi_2(BMap_1(S^2,S^2))=\Z/2\Z$. Fiberwise one point compactifying vector bundles induces a map $\big [M,BSO(2) \big ] \m \big [M,BMap_1(S^2,S^2) \big ]$. The Euler class gives an isomorphism between $\big [M,BSO(2) \big ]$ and $\Z$. Since orientable surfaces have even Euler class, the tangent bundle of $M$ gives the trivial element of $\big [M,BMap_1(S^2,S^2) \big ]$ and hence the sphere bundle associated to the tangent bundle is trivial.

\end{proof}

Using Proposition \ref{torCon}, Lemma \ref{unique}, Proposition \ref{mapTor} and Proposition \ref{tortrivial}, we can adapt the proofs of Proposition \ref{even} and Proposition \ref{evenNonOr} to prove the following proposition. 

\begin{proposition}
Assume that $(2k-\chi(M))/(2j-\chi(M))$ is a unit in $\Z_{(p)}$ and $n$ is even. Also assume that either $p \geq n/2+3/2$ or $M$ is an orientable surface. Then there is a homotopy equivalence $\Gamma_k(\dot TM_{(p)}) \simeq \Gamma_j(\dot TM_{(p)})$.
\label{evenSec}
\end{proposition}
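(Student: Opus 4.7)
The plan is to adapt the proofs of Proposition \ref{even} and Proposition \ref{evenNonOr} to the $p$-local setting, substituting each rational ingredient by its $p$-local counterpart now made available by Proposition \ref{torCon}, Proposition \ref{mapTor}, Proposition \ref{tortrivial}, and Lemma \ref{unique}. Note that under our hypotheses either $p \geq n/2 + 3/2$ (so $p \geq 3$ since $n \geq 2$), or $n = 2$ with $M$ an orientable surface; in either case $2$ is a unit in $\Z_{(p)}$, so expressions like $\chi(M)/2$ make sense throughout.

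For orientable $M$, Proposition \ref{tortrivial} provides a trivialization $\tau : \dot TM_{(p)} \to M \times \Sp$. I would then rerun the intersection-theoretic argument of Proposition \ref{even} verbatim in $\Z_{(p)}$-coefficient homology to obtain a homeomorphism
\[
\Gamma_k(\dot TM_{(p)}) \;\cong\; Map_{k - \chi(M)/2}(M, \Sp),
\]
and similarly for $j$. Since $(k-\chi(M)/2)/(j-\chi(M)/2) = (2k-\chi(M))/(2j-\chi(M))$ is a unit in $\Z_{(p)}$ by hypothesis, Proposition \ref{mapTor} yields the desired homotopy equivalence.

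For non-orientable $M$ (which forces $p \geq n/2 + 3/2$), I would follow Proposition \ref{evenNonOr}. For each unit $d \in \Z_{(p)}^\times$, I construct a bundle automorphism $f_d : \dot TM_{(p)} \to \dot TM_{(p)}$ of fiber degree $d$ by obstruction theory: the obstructions live in $H^i(M;\pi_{i-1}(Map_d(\Sp,\Sp)))$ for $i \leq n$ and vanish since $Map_d(\Sp,\Sp) \simeq Map_1(\Sp,\Sp)$ (by a $p$-local version of Lemma \ref{map}, using invertibility of $d$) and the latter is $(n-1)$-connected by Proposition \ref{torCon}. Lemma \ref{unique} then certifies that $f_d \circ f_{1/d}$, a fiber-degree-one bundle map, is a fiberwise homotopy equivalence, and hence so is $f_d$. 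To identify the target component, I pass to the orientation double cover $\tilde M$, which is orientable with $\chi(\tilde M) = 2\chi(M)$. The orientable case applied to $\tilde M$ shows $\tilde f_d$ sends $\Gamma_{2k}(\dot T\tilde M_{(p)})$ to $\Gamma_{2kd+(1-d)\chi(M)}(\dot T\tilde M_{(p)})$; since lifting to the double cover doubles degrees, $f_d$ takes $\Gamma_k(\dot TM_{(p)})$ homotopically equivalently onto $\Gamma_{kd+(1-d)\chi(M)/2}(\dot TM_{(p)})$. Setting $d = (2j-\chi(M))/(2k-\chi(M))$, a unit by hypothesis, finishes the argument.

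The main obstacle I anticipate is the weakened uniqueness statement in Lemma \ref{unique}: rather than $f_d \circ f_{1/d}$ being literally fiberwise homotopic to the identity (as in the rational case), only some power of it is. Resolving this requires the formal observation that a self-map with an invertible power is itself a homotopy equivalence, after which the argument runs in parallel with the proofs of Proposition \ref{even} and Proposition \ref{evenNonOr}. The one piece of bookkeeping that demands care is tracking how the degree shift $k \mapsto kd + (1-d)\chi(M)/2$ survives passage through the double cover, so that the hypothesis on $(2k-\chi(M))/(2j-\chi(M))$ matches the statement exactly.
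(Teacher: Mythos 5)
Your proof follows the same overall strategy as the paper's, and the orientable case is handled correctly. However, there is a genuine gap in the non-orientable case that the paper's proof is specifically designed to address and that you do not resolve.

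You assert that ``the orientable case applied to $\tilde M$ shows $\tilde f_d$ sends $\Gamma_{2k}(\dot T\tilde M_{(p)})$ to $\Gamma_{2kd+(1-d)\chi(M)}(\dot T\tilde M_{(p)})$.'' This does not follow from the orientable case as you have set it up. The orientable case produces the homeomorphism $\Gamma_m(\dot T\tilde M_{(p)}) \cong Map_{m-\chi(\tilde M)/2}(\tilde M,S^n_{(p)})$, but it does not by itself tell you the degree shift induced by an \emph{arbitrary} bundle map $\tilde f_d$ of fiber degree $d$. Concretely: under a trivialization $\tau$, the map $\tau \circ \tilde f_d \circ \tau^{-1}$ sends the base class $[B]\in H_n(\tilde M \times S^n_{(p)})$ to $[B]+c[F]$ for some a priori unknown $c\in\Z_{(p)}$, and the degree shift formula $2k \mapsto 2kd + (1-d)\chi(\tilde M)/2$ holds precisely when $c=0$. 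In the rational case this is automatic: $Map_d(\s,\s)$ has vanishing $\pi_i$ for $i\leq n$, so there is a \emph{unique} fiberwise-homotopy class of fiber degree $d$ bundle map, which can be taken to be $id_{\tilde M}\times g_d$, and that explicit map has $c=0$. In the $p$-local setting this uniqueness fails---Lemma \ref{unique} only gives that some \emph{power} of a fiber degree one bundle map is fiberwise homotopic to the identity---so $c=0$ requires an argument.

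The paper supplies exactly this missing ingredient. It isolates and proves the fact that, over an orientable base, any two bundle maps of the same fiber degree induce the same degree shift on sections. The proof represents $g_*$ on $H_n(\tilde M \times S^n_{(p)}) = \Z \oplus \Z_{(p)}$ as a matrix $\begin{pmatrix}\alpha & \beta\\ \gamma & \delta\end{pmatrix}$ (where $g$ is the comparison of two such maps, of fiber degree $1$), observes $\alpha=1$, $\beta=0$, $\delta=1$, and then uses the power statement in Lemma \ref{unique} to force $\gamma = 0$: since $g^N$ is fiberwise homotopic to the identity for some $N$, and $\begin{pmatrix}1 & 0\\ \gamma & 1\end{pmatrix}^N = \begin{pmatrix}1 & 0\\ N\gamma & 1\end{pmatrix}$, we get $N\gamma=0$ and hence $\gamma=0$ since $\Z_{(p)}$ is torsion-free. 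You correctly identify \emph{one} consequence of weakened uniqueness (that one must still check $f_d$ is a fiberwise homotopy equivalence), but the second consequence---that the degree shift of $\tilde f_d$ is not automatically the expected one---is the real content of the paper's proof, and your argument needs it.
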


\begin{proof}
The case that $M$ is orientable immediately follows from the arguments of Proposition \ref{even} using Proposition \ref{mapTor} and Proposition \ref{tortrivial}.

Now assume $p \geq n/2+3/2$. Let $d$ be a unit in $\Zp$. By Proposition \ref{mapTor}, $Map_1(\Sp,\Sp) \simeq Map_d(\Sp,\Sp)$. By Proposition \ref{torCon}, $Map_1(\Sp,\Sp)$ is $(n-1)$-connected and hence $Map_d(\Sp,\Sp)$ is as well. By obstruction theory, there is a bundle map $f_d$ of fiber degree $d$ since the obstructions lie in $H^i(M;\pi_{i-1}(Map_d(\Sp,\Sp)))$ for $i \leq n$. Note that $f_d$ is a fiberwise homotopy equivalence by Lemma \ref{unique} since $f_d \circ f_{1/d}$ is degree one. 

To make the rest of the arguments of Proposition \ref{evenNonOr} work in this case, we need to know the following fact: Let $d$ be a unit in $\Z_{(p)}$ and let $f_d$ and $f_d'$ be two fiber degree $d$ maps. If $M$ is orientable, then for any section $\sigma$, $f_d \circ \sigma$ and $f'_d \circ \sigma$ have the same degree. To prove this, let $g$ be the composition of $f_d$ with a fiber homotopy inverse of $f'_d$. The claim will follow by showing that $g$ induces the identity on $H_n(\dot TM_{(p)})= H_n(M \times S^n_{(p)}) =\Z \oplus \Z_{(p)}$ as the degree of $g \circ \sigma$ only depends on the degree of $\sigma$ and the map  $g_*:H_n(\dot TM_{(p)}) \m H_n(\dot TM_{(p)})$. Use the basis for $H_n(M \times S^n_{(p)})$ given by the fundamental classes of $M$ and $S^n_{(p)}$. Let $\begin{bmatrix}
 \alpha & \beta \\
 \gamma & \delta
\end{bmatrix}$ be the matrix associated to $g_* :H_n(M \times S^n_{(p)}) \m H_n(M \times S^n_{(p)}) $ in this basis. Since $g$ is a bundle map, $\alpha$=1 and $\beta=0$. Since $g$ has fiber degree equal to $1$, $\delta=1$. By Lemma \ref{unique}, there is some number $N$ such that $g^N$ is fiberwise homotopic to the identity and so $\gamma=0$. This fact is the substitute for the fact that, after rationalizing, there is a unique bundle automorphism inducing a degree $d$ map on each fiber. Everything else is a straightforward adaptation of the arguments of  Proposition \ref{evenNonOr}.
\end{proof}

Using Corollary \ref{scan}, Proposition \ref{oddSec}, Proposition \ref{mapTor} and Proposition \ref{evenSec}, we get the following theorem.

\begin{theorem}
Let $i \leq min(k/2,j/2)$. Then the $p$-torsion of $H_i(C_k(M))$ and $H_i(C_j(M))$ are isomorphic if at least one of the following four conditions are met:

1) $M$ is parallelizable and $k/j$ is a unit in $\Z_{(p)}$ 

2) $n$ is odd and $p \geq n/2+3/2$

3) $n$ is even, $p \geq n/2+3/2$ and $(2k-\chi(M))/(2j -\chi(M))$ is a unit in $\Z_{(p)}$

4) $n=2$, $M$ is orientable and $(2k-\chi(M))/(2j -\chi(M))$ is a unit in $\Z_{(p)}$.

\label{torsion}
\end{theorem}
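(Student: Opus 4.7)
The plan is to mirror the proof of the rational stability corollary following Theorem \ref{sec}: use the scanning map to reduce to section spaces, apply fiberwise $p$-localization, and then invoke the appropriate preceding proposition in each of the four cases.

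First, since $i \leq \min(k/2, j/2)$, Corollary \ref{scan} implies that the scanning maps $C_k(M) \m \Gamma_k(\dot TM)$ and $C_j(M) \m \Gamma_j(\dot TM)$ are isomorphisms on $H_i(-;\Z)$, and in particular on the $p$-torsion subgroup of $H_i$. Next, as noted at the start of this section, the natural map $\Gamma_k(\dot TM) \m \Gamma_k(\dot TM_{(p)})$ is a $p$-localization of spaces by Theorem 5.3 of \cite{Mo}. Since $p$-localization of an abelian group inverts every prime $q \neq p$, it sends the $p$-torsion subgroup of $H_i(\Gamma_k(\dot TM))$ isomorphically onto the $p$-torsion subgroup of $H_i(\Gamma_k(\dot TM_{(p)}))$, and similarly for $j$. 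The theorem therefore reduces to producing a homotopy equivalence $\Gamma_k(\dot TM_{(p)}) \simeq \Gamma_j(\dot TM_{(p)})$ under each of the four stated hypotheses.

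Case (2) is exactly Proposition \ref{oddSec}. Cases (3) and (4) are the two scenarios covered by Proposition \ref{evenSec}, and the unit hypotheses of the theorem translate verbatim into the hypotheses of that proposition. In case (1), parallelizability of $M$ forces $\chi(M) = 0$, and a trivialization of $TM$ induces a bundle trivialization $\dot TM_{(p)} \cong M \times \Sp$ under which (by the intersection-number calculation of Proposition \ref{even}) the zero section corresponds to a constant section; this gives a degree-preserving homeomorphism $\Gamma_k(\dot TM_{(p)}) \cong Map_k(M, \Sp)$, and Proposition \ref{mapTor} applied to $k/j \in \Zp^{\times}$ then delivers the equivalence.

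The only step not reducible to direct quotation of the earlier results is the preservation of $p$-torsion under $p$-localization of section spaces, and this is a formal algebraic consequence of the fact that tensoring with $\Zp$ kills torsion of order coprime to $p$ while fixing $p$-torsion. I therefore expect the theorem to follow by a short argument of bookkeeping, with the substantive content having been packaged into the propositions of this section.
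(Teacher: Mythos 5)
Your proposal is correct and matches the paper's (implicit) argument: the paper itself offers no written proof of Theorem~\ref{torsion} beyond the single sentence ``Using Corollary~\ref{scan}, Proposition~\ref{oddSec}, Proposition~\ref{mapTor} and Proposition~\ref{evenSec}, we get the following theorem,'' and your write-up is exactly the bookkeeping that sentence elides. The one step you spell out that the paper leaves completely tacit --- that the $p$-localization map $\Gamma_k(\dot TM)\to\Gamma_k(\dot TM_{(p)})$ induces $H_i(-;\Z)\otimes\Zp$ on integral homology and hence an isomorphism on $p$-primary torsion subgroups --- is correct and is the right justification for passing from section spaces to their fiberwise $p$-localizations.
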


For example, part 1 of the above theorem implies that the $2$-torsion of the homology of $C_{2k+1}(M)$ stabilizes for $M$ parallelizable. This contrasts with the case of the sphere where part 4 indicates that the $2$-torsion of $H_*(C_{2k}(S^2))$ stabilizes. Fadell and Van Buskirk's calculation that $H_1(Br_k(S^2)) =H_1(C_k(S^2)) = \Z /(2k-2) \Z$ (page 255 of \cite{FV}) shows that the two torsion in $H_*(C_{2k+1}(S^2))$ does not stabilize. So in some sense, Theorem \ref{torsion} is optimal in this case.

\bibliography{thesis2}{}

\def\cprime{$'$}
\begin{thebibliography}{BCT89}

\bibitem[BCT89]{BCT}
C.-F. B{\"o}digheimer, F.~Cohen, and L.~Taylor.
\newblock On the homology of configuration spaces.
\newblock {\em Topology}, 28(1):111--123, 1989.

\bibitem[Chu12]{Ch}
Thomas Church.
\newblock Homological stability for configuration spaces of manifolds.
\newblock {\em Invent. Math.}, 188(2):465--504, 2012.

\bibitem[FVB62]{FV}
Edward Fadell and James Van~Buskirk.
\newblock The braid groups of {$E^{2}$} and {$S^{2}$}.
\newblock {\em Duke Math. J.}, 29:243--257, 1962.

\bibitem[Hil73]{Hi}
Peter Hilton.
\newblock Localization and cohomology of nilpotent groups.
\newblock {\em Math. Z.}, 132:263--286, 1973.

\bibitem[Hu46]{Hu}
Sze-tsen Hu.
\newblock Concerning the homotopy groups of the components of the mapping space
  {$Y^{S^{p}}$}.
\newblock {\em Nederl. Akad. Wetensch., Proc.}, 49:1025--1031 = Indagationes
  Math. 8, 623--629 (1946), 1946.

\bibitem[McD75]{Mc1}
D.~McDuff.
\newblock Configuration spaces of positive and negative particles.
\newblock {\em Topology}, 14:91--107, 1975.

\bibitem[M{\o}l87]{Mo}
Jesper~Michael M{\o}ller.
\newblock Nilpotent spaces of sections.
\newblock {\em Trans. Amer. Math. Soc.}, 303(2):733--741, 1987.

\bibitem[MR85]{MR}
Jesper~Michael M{\o}ller and Martin Raussen.
\newblock Rational homotopy of spaces of maps into spheres and complex
  projective spaces.
\newblock {\em Trans. Amer. Math. Soc.}, 292(2):721--732, 1985.

\bibitem[RW13]{RW}
Oscar Randal-Williams.
\newblock Homological stability for unordered configuration spaces.
\newblock {\em Quarterly Journal of Mathematics}, (64 (1)):303--326, 2013.

\bibitem[Seg79]{Se}
Graeme Segal.
\newblock The topology of spaces of rational functions.
\newblock {\em Acta Math.}, 143(1-2):39--72, 1979.

\bibitem[Ser51]{Ser}
Jean-Pierre Serre.
\newblock Homologie singuli\`ere des espaces fibr\'es. {A}pplications.
\newblock {\em Ann. of Math. (2)}, 54:425--505, 1951.

\bibitem[Sul74]{Su}
Dennis Sullivan.
\newblock Genetics of homotopy theory and the {A}dams conjecture.
\newblock {\em Ann. of Math. (2)}, 100:1--79, 1974.

\bibitem[Tho56]{Th}
Ren\'e Thom.
\newblock L'homologie des espaces fonctionelles.
\newblock pages 29--39. Thone Liège; Masson, Paris, 1956.

\bibitem[Whi46]{W1}
George~W. Whitehead.
\newblock On products in homotopy groups.
\newblock {\em Ann. of Math (2)}, 47:460--475, 1946.

\end{thebibliography}
\bibliographystyle{alpha}

\end{document}